\title[On the Maximum of the Permanent of $I-A$]{On the Maximum of the Permanent of $(I-A)$}
\author{Zhi Chen}
\address{Department of Mathematics, Nanjing Agricultural University, Jiangsu, 210095, China}
\email {chenzhi@njau.edu.cn }
\author{Lei Cao}
\address{Department of Mathematics, Georgian Court University, Lakewood, NJ 08701, USA}
\email {lcao@georgian.edu}
\thanks{Z. Chen is supported by the Natural Science Foundation of Jiangsu Province (BK20160708); the Fundamental Research Funds for the Central Universities (No.KJQN201718); the National Natural Science Foundation of China (No.11601233).}
\subjclass[2010]{15A18, 15A42; Secondary: 05A17}
\keywords{Permanent, Doubly Substochastic Matrices, Sub-defect.}
\theoremstyle{plain}
\newtheorem{thm}{Theorem}[section]
\newtheorem{cor}[thm]{Corollary}
\newtheorem{lem}[thm]{Lemma}
\newtheorem{prop}[thm]{Proposition}
\newtheorem{conj}[thm]{Conjecture}
\theoremstyle{remark}
\newtheorem{remark}{Remark}
\theoremstyle{question}
\numberwithin{equation}{section}
\newcommand{\beq}{\begin{equation}}
\newcommand{\eeq}{\end{equation}}
\theoremstyle{remark}
\numberwithin{equation}{section}
\newcommand{\bbm}{\begin{bmatrix}}
\newcommand{\ebm}{\end{bmatrix}}
\begin{document}

\maketitle

\begin{abstract}
Let $A$ be an $n\times n$ doubly substochastic matrix and denote by $\sigma(A)$ the sum of all elements of $A.$  In this paper we give the upper bound of the permanent of $(I-A)$ with respect to $n$ and $\sigma(A).$
\end{abstract}

%\begin{keyword}
%Trace problem, A. Horn's problem.
%
%{\it Mathematics subject classification:} Primary
%15A18, 15A42; Secondary: 05A17
%\end{keyword}
%
%\end{frontmatter}

\section{Introduction}
Let $A = [a_{ij}]$ be an $n \times n$ matrix and $S_n$ be the symmetric group of order $n.$
The {\it permanent} of $A$ is the scalar-valued function of A defined by
$$ {\rm per(A)}=\sum_{\pi\in S_n}a_{1\pi(1)}a_{2\pi(2)}\ldots a_{n\pi(n)},$$
where the summation extends over all $n!$ permutations in $S_n.$ This function has been studied intensively (see \cite{per}, \cite{theory1} and \cite{theory2}) and it appears naturally in many combinatorial settings where a count of the number of systems of distinct representatives of some configuration is required \cite{ryser_1963}.

An $n\times n$ nonnegative matrix is said to be a {\it doubly stochastic matrix} if the sum of each row and column is one. If we allow the sum of each row and column to be less than or equal to one, then we get a {\it doubly substochastic matrix}. Denote by $\Omega_n$ the set of all $n\times n$ doubly stochastic matrices and $\omega_n$ the set of all $n\times n$ doubly substochastic matrices. Let $I$ be the $n\times n $ identity matrix. In this paper, we focus on per$(I-A)$ for $A\in \omega_n$, which was brought to people's attention by Marcus and Minc in 1965 who conjectured the following lower bound of per$(I-A)$:
\begin{equation} \label{ineq1}
 {\rm per}(I-A)\geq 0
\end{equation}
for all $A\in \Omega_n$ (See Conjecture 7 in \cite{per}).
It was firstly solved by Brualdi and Newman~\cite{BN1966} who showed that \eqref{ineq1} is true in a more general case when $A$ is a row substochastic matrix. A row substochastic matrix, or sometimes a substochastic matrix, is a nonnegative matrix with all row sums less than or equal to one. We denote the set of all $n\times n$ row substochastic matrices by $\tilde{\omega}_n$. Gibson then gave another short proof in~\cite{GB2} and later improved~\eqref{ineq1} in~\cite{GB1} to the following inequality:
\begin{equation} \label{ineq2}
{\rm per}(I-A)\geq\det(I-A) \geq 0
\end{equation}
for $A\in\tilde{\omega}_n$. The upper bound was given by Malek~\cite{MM} who showed that
\begin{equation} \label{ineq3}
{\rm per}(I-A)\leq 2^{\lfloor\frac{n}{2}\rfloor}
\end{equation}
for $A\in\tilde{\omega}_n$, where $\lfloor x\rfloor$ takes the greatest integer less than or equal to $x$.
Since a doubly substochastic matrix is surely to be row substochastic,  \eqref{ineq3} also provides an upper bound for all matrices in $\omega_n$.
However, \eqref{ineq3} is not that accurate regarding to the row substochastic matrices with summation of all elements far less than its size. 
Therefore in this paper we explore a finer upper bound of ${\rm per}(I-A)$  for a doubly substochastic matrix $A$ with fixed summation of all elements. To do this,  we partition $\omega_n$ with respect to the sum of all elements in the matrices.
Let $A=[a_{ij}]$ be an $n$-square matrix and denote by $\sigma(A)$ the sum of all elements $\sum_{i,j=1}^{n}a_{i j}$. It is easy to see that for any $0\leq s\leq n,$ the sets
$$\omega_n^s:=\{A\in \omega_n \ |\ \sigma(A)=s \} \ {\rm and} \  \tilde{\omega}_n^s:=\{A\in \tilde{\omega}_n \ |\ \sigma(A)=s \}$$
are convex. Moreover in this paper we shall give an upper bound for both of the sets
\begin{equation*}
\{{\rm per} (I-A)|A\in \omega_n^s\}\ {\rm and} \  \{{\rm per} (I-A)|A\in \tilde{\omega}_n^s\},
\end{equation*}
for every $0\leq s \leq n-1$. Moreover, the matrices in $\omega_n^s$ which makes the upper bound attained are also given. For $n-1<s\leq n$, the upper bound is known in the case when $n$ is even, but getting complicated and unclear when $n$ is odd. We will discuss the problem in the last section of this paper.

Another interesting characteristic on doubly substochastic matrices called {\it sub-defect} was first defined by Cao, Koyuncu and Parmer in~\cite{CK2016}. It is the smallest integer $k$ such that there exists an $(n + k) \times (n + k)$ doubly stochastic matrix containing $A$ as a submatrix. We often use $sd(A)$ to denote the sub-defect of a matrix $A\in\omega_n$. For more details about the definition, please see~\cite{CK2016}.
It has been shown by Theorem 2.1 in~\cite{CK2016} that the sub-defect can be calculated easily by taking the ceiling of the difference of its size and the sum of all entries. That is for $A\in\omega_n$, we have $$sd(A)=\lceil n-\sigma(A)\rceil.$$
Denote by $\omega_{n,k}$ the set of all $n\times n$ doubly substochastic matrices with sub-defect equal to $k$. Then we can partition $\omega_n$ into $n + 1$ convex subsets which are $\omega_{n,0} = \Omega_n, \omega_{n,1}, . . . , \omega_{n,n.}$ Namely,
\begin{enumerate}
\item $\omega_{n,k}$ is convex for all $k;$
\item $\omega_{n,i}\cap \omega_{n,j}=\emptyset$ for $i\neq j;$
\item $\bigcup_{i=0}^n\omega_{n,i}=\omega_n.$
\end{enumerate}
Also we see that
\begin{equation*}
\omega_{n,k}=\bigcup_{n-k\leq s<n-k+1} \{A\in\omega_n| \sigma(A)=s\}.
\end{equation*}
As a consequence, we obtain the upper bound of per$(I-A)$ with respect to the sub-defect of $A$ as well.

This paper is organized as follows. In section~\ref{sec:Pre}, we use Ryser's representation of permanent to show that if $A\in \omega_n$  which maximizes ${\rm per}(I-A)$, then all elements on the main diagonal of $A$ are zero. In section~\ref{sec:maxper}, we give the upper bound of ${\rm per}(I-A)$ for both $A\in\omega_n^s$ and $A\in\tilde{\omega}_n^s$ satisfying either $n$ even or $\sigma(A)\leq n-1$. For the case that $n$ is odd and $n-1<\sigma(A)$, the upper bound of ${\rm per}(I-A)$ for $A\in\tilde{\omega}_n^s$ is given in section~\ref{sec:ques}. For $A\in\omega_n^s$, the upper bound of ${\rm per}(I-A)$ still remains mystery. We discuss the case when $n$ is small and a few conjectures are also given in the last section.

\section{Preliminary}\label{sec:Pre}

In \cite{MM}, one of the main results is that if per$(I-A)$ is maximum for $A\in\tilde{\omega}_n$, then $A$ has a zero main diagonal. To show this, for a given $A\in\tilde{\omega}_n$ with non-zero diagonal elements, Malek in~\cite{MM} constructed another matrix $B\in \tilde{\omega}_n$ which has more zero elements on the main diagonal such that ${\rm per}(I-B)>{\rm per}(I-A).$ However, $\sigma(B) > \sigma(A)$ in Malek's construction, so we can not use the result. Here we give an alternative proof via the representation of the permanent given by Ryser (See Theorem 7.1.1 in \cite{CTM91}, also \cite{ryser_1963}). To state the representation, we need some notations.

Let $A$ be an $n\times n$ matrix. We denote by $r_i(A)$ the sum of all elements in the $i$th row of $A$. Let $A(j_1, j_2,\ldots, j_m)$ be the matrix obtained from $A$ by replacing the elements in columns $j_1, j_2, \ldots, j_m$ by zero's. Thus $r_i(A(j_1, j_2,\ldots, j_m))$ is the sum of all elements in the $i$th row of $A$ except for $a_{i j_1}, a_{i j_2}, \ldots, a_{i j_m}$. Let $S(A)=\prod_{i=1}^n r_i(A)$, the product of all row sums of $A$. Thus $S(A_m(j_1,j_2,\ldots, j_m))=\prod_{i=1}^n r_i(A(j_1,j_2,\ldots, j_m))$. Denote by $A_m$ the matrix obtained from $A$ by replacing some $m$ columns of $A$ by zero columns. We also use $\sum(-1)^mS(A_m)$ to denote the sum over all $n$ choose $m$ replacements of $m$ columns of $A$ by zero columns.

\begin{prop} \label{Pro1}
Let $A=[a_{ij}]\in \omega_n$ and $P=I-A$. Then
\begin{enumerate}
\item $r_i(P) \geq 0.$
\item $r_i(P(j_1,j_2,\ldots, j_m)) \leq 0 $ if $i=j_t$ for some $1\leq t\leq m$.
\item $S(P_m(j_1,j_2,\ldots, j_m)) \leq 0 $ if $m$ is odd, and $S(P_m(j_1,j_2,\ldots, j_m)) \geq 0 $ if $m$ is even.
\end{enumerate}
\end{prop}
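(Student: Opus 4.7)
The proof should flow directly from the definition of $P=I-A$ together with the hypothesis that $A$ is doubly substochastic, so each part should reduce to a short sign-tracking argument.

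For part (1), I would just compute: the $i$th row of $P$ is $e_i^T - (\text{$i$th row of }A)$, so $r_i(P)=1-r_i(A)$, and since $A\in\omega_n$ satisfies $r_i(A)\le 1$, this is nonnegative. This is a one-line observation.

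For part (2), the key is to split on whether the diagonal entry $p_{ii}=1-a_{ii}$ survives after deleting columns $j_1,\ldots,j_m$. When $i=j_t$ for some $t$, column $i$ is zeroed out, so the $1$ coming from the identity is discarded, and $r_i(P(j_1,\ldots,j_m))$ equals $-\sum_{j\notin\{j_1,\ldots,j_m\}}a_{ij}\le 0$. (Incidentally, when $i\notin\{j_1,\ldots,j_m\}$, the same computation gives $1-\sum_{j\notin\{j_1,\ldots,j_m\}}a_{ij}\ge 1-r_i(A)\ge 0$, a fact I will need in part (3) anyway.)

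For part (3), I would combine the two cases of the previous paragraph. Among the $n$ factors in $S(P_m(j_1,\ldots,j_m))=\prod_i r_i(P(j_1,\ldots,j_m))$, exactly the $m$ rows indexed by $i\in\{j_1,\ldots,j_m\}$ are nonpositive (by part (2)), while the remaining $n-m$ rows are nonnegative (by the computation just noted). Thus the product is the product of $m$ nonpositive numbers times nonnegative numbers, whose sign is $(-1)^m$ unless the product vanishes; so $S(P_m(\cdots))\le 0$ when $m$ is odd and $\ge 0$ when $m$ is even. The only subtlety is the possibility that some of the "nonpositive" factors are actually zero, but this only forces the whole product to be zero, which is consistent with both claimed inequalities. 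I do not anticipate any real obstacle here — the proposition is essentially a bookkeeping statement designed to set up Ryser's formula for use in later sections.
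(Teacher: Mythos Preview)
Your proposal is correct and follows essentially the same approach as the paper's proof: compute $r_i(P)=1-\sum_j a_{ij}$ for (1), observe that deleting column $i$ removes the only nonnegative entry $1-a_{ii}$ from row $i$ for (2), and count nonpositive factors for (3). Your write-up is in fact slightly more explicit than the paper's, since you verify that the remaining $n-m$ row sums are nonnegative in part (3), a point the paper leaves implicit.
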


\begin{proof}
\textit{(1)} It is because $r_i(P)=1-\sum_{j=1}^n a_{i j}\geq 0$.

\textit{(2)} Notice that in each row $i$ of $P$ for $1\leq i\leq n$, only the diagonal entry $1-a_{ii}$ is nonnegative. If $i=j_t$ for some $1\leq t\leq m$, that means the diagonal element $1-a_{i i}$ needed to be replaced by $0$ when we calculate $r_i(P(j_1, j_2, \ldots, j_m))$. Thus the $i$th row of $P(j_1,\ldots, j_m)$ only contains non-positive entries. Therefore $r_i(P(j_1,j_2,\ldots, j_m)) \leq 0$.

\textit{(3)} It follows from (2) because in $S(P_m(j_1,j_2,\ldots, j_m))$ there are $m$ non-positive factors, which are $r_{j_1}(P(j_1,j_2,\ldots, j_m)), \ldots, r_{j_m}(P(j_1,j_2,\ldots, j_m))$.
\end{proof}

\begin{thm}\label{Ryser}(Ryser's presentation of the permanent~\cite{CTM91, ryser_1963}) Let $A$ be a matrix, then
\begin{equation}\nonumber {\rm per}(A)=S(A_0)+\sum(-1)S(A_1)+\ldots+\sum (-1)^rS(A_r)+\ldots+\sum(-1)^{n-1}  S(A_{n-1}). \end{equation}
\end{thm}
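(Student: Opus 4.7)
The plan is to prove Ryser's identity by inclusion--exclusion: expand each term $S(A_m(j_1,\ldots,j_m))$ as a sum over arbitrary functions $f:[n]\to[n]$ whose image avoids the zeroed columns, and then identify which functions survive after the alternating summation. First, for each $m$-subset $T=\{j_1,\ldots,j_m\}$ of $[n]$, distributivity immediately gives
\[
S(A(j_1,\ldots,j_m)) \;=\; \prod_{i=1}^n \Big(\sum_{j\notin T} a_{ij}\Big) \;=\; \sum_{f:[n]\to [n]\setminus T}\; \prod_{i=1}^n a_{i,f(i)},
\]
where $f$ ranges over all (not necessarily injective) functions whose image is disjoint from $T$.

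Next, I would substitute this expansion into the right-hand side of the stated formula. Because $S(A_n)=0$, I can freely adjoin the $m=n$ term and rewrite the RHS as a single sum over all subsets,
\[
\sum_{T\subseteq [n]} (-1)^{|T|}\sum_{f:[n]\to [n]\setminus T}\prod_{i=1}^n a_{i,f(i)}.
\]
Swapping the order of summation, each function $f:[n]\to[n]$ contributes $\prod_i a_{i,f(i)}$ weighted by the signed count $\sum_{T\subseteq [n]\setminus f([n])} (-1)^{|T|}$. By the elementary fact that an alternating sum over all subsets of a nonempty set equals zero, this inner sum vanishes whenever $f([n])\ne [n]$ and equals $1$ exactly when $f$ is a bijection. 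Hence only permutations survive, and the RHS collapses to $\sum_{\pi\in S_n}\prod_i a_{i,\pi(i)} = {\rm per}(A)$, which is the desired identity.

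The argument is essentially pure inclusion--exclusion applied to the set of non--surjective functions $[n]\to[n]$, so there is no substantive obstacle; the only care required is the bookkeeping of the double sum and the verification that the alternating subset sum vanishes over any nonempty ground set. Note also that Proposition~\ref{Pro1} plays no role in the Ryser identity itself---it is stated separately so that its sign information can be combined with Ryser's formula later to bound ${\rm per}(I-A)$ from above.
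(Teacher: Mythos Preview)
Your inclusion--exclusion argument is correct and is essentially the standard proof of Ryser's formula. Note, however, that the paper does not actually prove Theorem~\ref{Ryser}: it is quoted as a known result from \cite{CTM91, ryser_1963} and used as a tool, so there is no ``paper's own proof'' to compare against. Your proof matches the classical derivation found in those references.
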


\begin{lem} \label{leilm1}
Let $A=[a_{ij}]$ be an $n \times n$  matrix with all entries nonnegative and $a_{ii}\neq 0$ for some $i.$ Define a matrix $\tilde{A}=[\tilde{a}_{ij}]$ where all elements in $\tilde{A}$ are the same as elements in $A$ except
$$\tilde{a}_{ii}=a_{ii}-\epsilon,\  \tilde{a}_{ij}=a_{ij}+\epsilon$$
for some $j\neq i.$ Then
$${\rm per}(I-A)\leq per(I-\tilde{A}).$$
\end{lem}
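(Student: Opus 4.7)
The plan is to reduce the inequality, via multilinearity of the permanent in a single row applied twice, to the Brualdi--Newman nonnegativity of ${\rm per}(I-X)$ for row substochastic $X$ (the extension of~\eqref{ineq1} established in~\cite{BN1966}).

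First, since $A$ and $\tilde{A}$ agree outside row $i$, expanding the permanent along row $i$ gives
\begin{equation*}
{\rm per}(I-\tilde{A}) - {\rm per}(I-A) = \epsilon\bigl[{\rm per}((I-A)(i|i)) - {\rm per}((I-A)(i|j))\bigr],
\end{equation*}
where $(I-A)(i|k)$ denotes the $(n-1)\times(n-1)$ submatrix obtained by deleting row $i$ and column $k$ from $I-A$. Thus the lemma reduces to showing ${\rm per}((I-A)(i|i)) \geq {\rm per}((I-A)(i|j))$.

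Next, I would package this difference as a single permanent. Let $e_j$ denote the $j$th standard column vector, and let $A'$ be the matrix obtained from $A$ by replacing its $i$th row with the unit row vector $e_j^T$. Then row $i$ of $I-A'$ equals $e_i^T - e_j^T$, and since the altered entries all lie in the removed row, $(I-A')(i|k) = (I-A)(i|k)$ for every $k$. Expanding ${\rm per}(I-A')$ along row $i$ therefore yields
\begin{equation*}
{\rm per}(I-A') = {\rm per}((I-A)(i|i)) - {\rm per}((I-A)(i|j)).
\end{equation*}
The matrix $A'$ lies in $\tilde{\omega}_n$: its new row $i$ sums to $1$, and every other row of $A'$ is a row of $A$, whose sum is at most $1$ under the (implicit) substochastic hypothesis on $A$. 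The Brualdi--Newman theorem~\cite{BN1966} then gives ${\rm per}(I-A') \geq 0$, which combined with the previous display completes the proof.

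The main obstacle is the middle step: the difference of two permanents of distinct minors carries no obvious sign, and the key maneuver is to realize it as the permanent of a single $n\times n$ matrix via the row vector $e_i^T - e_j^T$; after that, the reduction to Brualdi--Newman is immediate. One small caveat is that the ``nonnegative'' hypothesis must, in context, be strengthened to (row) substochasticity, since simple $2\times 2$ examples with a row sum exceeding $1$ can violate the stated inequality.
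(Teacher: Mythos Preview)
Your argument is correct and takes a genuinely different route from the paper. The paper applies Ryser's inclusion--exclusion formula (Theorem~\ref{Ryser}) to $M=I-A$ and $N=I-\tilde{A}$, then uses the sign pattern of partial row sums from Proposition~\ref{Pro1} to show that each block $\sum(-1)^t\bigl[S(N_t)-S(M_t)\bigr]$ in the difference is nonnegative. Your approach instead expands along row $i$ to isolate the difference as $\epsilon\bigl[{\rm per}((I-A)(i|i)) - {\rm per}((I-A)(i|j))\bigr]$, then packages this as ${\rm per}(I-A')$ with $A'$ obtained by replacing row $i$ of $A$ with $e_j^T$, and invokes Brualdi--Newman~\cite{BN1966}. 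Your proof is considerably shorter and more transparent; the paper's proof is more self-contained in that Ryser's identity is purely combinatorial, whereas you import the nonnegativity result~\eqref{ineq1} as a black box---but since that result is already quoted in the paper, this is no real cost.

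Your caveat at the end is exactly right and worth emphasizing: the lemma as stated (merely ``nonnegative $A$'') is false without row substochasticity---for example, $n=2$, $A=\begin{pmatrix}1&0\\0&2\end{pmatrix}$, $i=1$, $j=2$ gives ${\rm per}((I-A)(1|1))=-1<0={\rm per}((I-A)(1|2))$. The paper's proof has the same hidden hypothesis: its sign analysis relies on $r_j(M(\cdots))\geq 0$ for rows $j$ whose diagonal column is not deleted, which is exactly $\sum_k a_{jk}\leq 1$. So both arguments need the substochastic assumption, and you were right to flag it.
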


\begin{proof}
Without loss of generality, we prove the case when $i=1$ and $j=2.$ Assume that $a_{11}\neq 0$ and let $$\tilde{a}_{11}=a_{11}-\epsilon,\  \tilde{a}_{12}=a_{12}+\epsilon,$$  then we need to show that
$${\rm per}(I-A)\leq {\rm per}(I-\tilde{A}).$$
Otherwise, one can always consider $PAQ$ instead of $A,$ where $P, Q$ are permutation matrices such that the element in the first row and first column of $PAQ$ is positive.

Denote
\begin{equation*}
M=I-A=\begin{pmatrix}
1-a_{1 1} & -a_{1 2} & \cdots & -a_{1 n}\\
 -a_{2 1} & 1-a_{2 2} & \cdots & -a_{2 n} \\
 \vdots &\vdots & \ddots & \vdots \\
-a_{n 1} & -a_{n 2} & \cdots & 1-a_{n n}
\end{pmatrix},
\end{equation*}
and
\begin{equation*}
N=I-\tilde{A}=\begin{pmatrix}
1-a_{1 1}+\epsilon & -a_{1 2}-\epsilon & \cdots & -a_{1 n}\\
 -a_{2 1} & 1-a_{2 2} & \cdots & -a_{2 n} \\
 \vdots &\vdots & \ddots & \vdots \\
-a_{n 1} & -a_{n 2} & \cdots & 1-a_{n n}
\end{pmatrix}.
\end{equation*}
In order to show that ${\rm per}(M)\leq {\rm per}(N),$ we apply Theorem \ref{Ryser} to both $M$ and $N$ and compare corresponding terms in \eqref{R1} and \eqref{R2}.
\begin{equation}\label{R1} {\rm per}(M)=S(M_0)+\sum(-1)S(M_1)+\ldots+\sum (-1)^rS(M_r)+\ldots+\sum(-1)^{n-1}  S(M_{n-1}) \end{equation}
\begin{equation}\label{R2} {\rm per}(N)=S(N_0)+\sum(-1)S(N_1)+\ldots+\sum (-1)^rS(N_r)+\ldots+\sum(-1)^{n-1}  S(N_{n-1}) \end{equation}
For all $1\leq j\leq n$, we have $r_j(M)=r_j(N)$, which implies $S(M_0)=S(N_0)$.
Notice that
\begin{equation*}
\sum S(M_1)=\sum_{i=1}^n \prod_{j=1}^n r_j(M(i)),
\end{equation*}
and
\begin{align*}
\sum S(N_1)=&\sum_{i=1}^n \prod_{j=1}^n r_j(N(i))\\
      =&(r_1(M(1))-\epsilon)r_2(M(1))\cdots r_n(M(1))\\
       &+(r_1(M(2))+\epsilon)r_2(M(2))\cdots r_n(M(2))+\sum_{i=3}^n \prod_{j=1}^n r_j(M(i))\\
      =&\sum S(M_1)+\epsilon[r_2(M(2))\cdots r_n(M(2))-r_2(M(1))\cdots r_n(M(1))],
\end{align*}
where $r_2(M(2))\leq 0$ and all other row sums are nonnegative due to Proposition \ref{Pro1} (ii). Thus we have
$$\sum[S(N_1)-S(M_1)]=\epsilon[r_2(M(2))\cdots r_n(M(2))-r_2(M(1))\cdots r_n(M(1))].$$
Therefore $\sum[S(N_1)-S(M_1)]\leq 0.$ We can write $\sum[S(N_1)-S(M_1)]=(-1)C_1$, where $C_1\geq 0$ a positive constant.

In general for $t\geq 2$, we have
\begin{equation*}
\sum S(M_t)=\sum_{1\leq i_1<i_2<\ldots < i_t\leq n} \prod_{j=1}^n r_j(M(i_1,i_2,\ldots, i_t)),
\end{equation*}
and
\begin{align*}
\sum S(N_t)=&\sum_{1\leq i_1<i_2<\ldots < i_t\leq n} \prod_{j=1}^n r_j(N(i_1,i_2,\ldots, i_t))\\
      =&\sum_{3\leq i_2<i_3<\ldots < i_t\leq n} \left[\prod_{j=1}^n r_j(N(1,i_2,\ldots, i_t))+\prod_{j=1}^n r_j(N(2,i_2,\ldots, i_{t}))\right]\\
       &+\sum_{3\leq i_3<i_4<\ldots < i_t\leq n} \prod_{j=1}^n r_j(N(1, 2, i_3,\ldots, i_t))+\sum_{3\leq i_1<i_2<\ldots < i_t\leq n} \prod_{j=1}^n r_j(N(i_1,i_2,\ldots, i_t))\\
      =&\sum_{3\leq i_2<i_3<\ldots < i_t\leq n} \left[\left(r_1(M(1, i_2, \ldots, i_t))-\epsilon\right)\prod_{j=2}^n r_j(M(1,i_2,\ldots, i_t))\right.\\
       &\left.+\left(r_1(M(2, i_2, \ldots, i_t))+\epsilon\right)\prod_{j=2}^n r_j(M(2,i_2,\ldots, i_{t}))\right] \\
       &+\sum_{3\leq i_3<i_4<\ldots < i_t\leq n} \prod_{j=1}^n r_j(M(1,2,i_3,\ldots, i_t))+\sum_{3\leq i_1<i_2<\ldots <i_t\leq n} \prod_{j=1}^n r_j(M(i_1,i_2,\ldots, i_t)).
\end{align*}
So
\begin{align*}
\sum[S(N_t)-S(M_t)]&=\sum_{3\leq i_2<i_3<\ldots < i_t\leq n}\epsilon\left[\prod_{j=2}^n r_j(M(2,i_2,\ldots, i_{t}))-\prod_{j=2}^n r_j(M(1,i_2,\ldots, i_{t}))\right].
\end{align*}
Notice that as $j$ runs from $2$ to $n$, $r_j(M(2,i_2,\ldots, i_{t})) \leq 0$ when $j=2,i_2,\ldots, i_{t}$. In the same while, $r_j(M(1,i_2,\ldots, i_{t})) \leq 0$ when $j=i_2,\ldots, i_{t}$. Thus we have
\begin{equation}\label{leieq1}
\sum[S(N_t)-S(M_t)]=(-1)^{t}C_t
\end{equation}
where $C_t\geq 0$ a constant. Now we can write
\begin{align*}
{\rm per}(N)-{\rm per}(M)&=S(N_0)-S(M_0)+\sum_{t=1}^{n-1}(-1)^t\left[S(N_t)-S(M_t)\right]\\
                         &=\sum_{t=1}^{n-1}(-1)^{2t}C_t =\sum_{t=1}^{n-1}C_t\geq 0.
\end{align*}
Therefore,
\begin{equation*}
{\rm per}(I-\tilde{A})\geq {\rm per}(I-A).
\end{equation*}
\end{proof}
\begin{cor} \label{Leico1}
Let $A=[a_{ij}]$ be an $n\times n$ row substochastic(stochastic) matrix satisfying $\sigma(A)=s$ and
\begin{align*}
{\rm per}(I-A)=\max\{{\rm per}(I-B)\ |\ &B\ {\rm is\ row\ substochastic(stochastic)}, \\
                                        &{\rm and}\ \sigma(B)=s \}.
\end{align*}
Then all the main diagonal entries of $A$ are zero.
\end{cor}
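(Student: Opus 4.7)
The plan is to argue by contradiction, using Lemma~\ref{leilm1} directly. Suppose $A$ satisfies the hypotheses of the corollary but that some diagonal entry $a_{ii}$ is strictly positive. Fix any $j\neq i$ and form $\tilde{A}$ exactly as in Lemma~\ref{leilm1} with shift $\epsilon=a_{ii}$, so that $\tilde{a}_{ii}=0$, $\tilde{a}_{ij}=a_{ij}+a_{ii}$, and all other entries agree with $A$. Because the redistribution is confined to row $i$, the $i$-th row sum of $\tilde A$ equals $r_i(A)\leq 1$, every other row is untouched, and $\sigma(\tilde A)=\sigma(A)=s$. Therefore $\tilde A$ lies in $\tilde{\omega}_n^s$ (respectively in the row-stochastic version of this set), so it is a legitimate competitor for the maximum.

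Next I would invoke Lemma~\ref{leilm1} to conclude ${\rm per}(I-\tilde A)\geq {\rm per}(I-A)$. Since $A$ was chosen as a maximizer, equality must hold, so $\tilde A$ is itself a maximizer with strictly fewer nonzero diagonal entries than $A$. Iterating this construction over every index $i$ with $a_{ii}>0$ produces a chain of maximizers ending at a matrix $A^{*}$ with zero diagonal. This already gives the statement in the ``without loss of generality'' sense that the later sections need.

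To obtain the stronger conclusion exactly as phrased (\emph{every} maximizer has zero diagonal), I would need to upgrade Lemma~\ref{leilm1} from a weak to a strict inequality whenever $a_{ii}>0$. My plan for this upgrade is to revisit the final display in the proof of that lemma, namely
\[
{\rm per}(I-\tilde A)-{\rm per}(I-A)=\sum_{t=1}^{n-1}C_t,\qquad C_t\geq 0,
\]
and isolate a single term guaranteed to be positive. A natural candidate is $C_1$, which comes out to $\epsilon\bigl[r_2(M(1))\cdots r_n(M(1))-r_2(M(2))\cdots r_n(M(2))\bigr]$: the first product is a product of nonnegative row sums, while the second has $r_2(M(2))\leq 0$ making the product nonpositive. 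I would argue that the substochasticity combined with $a_{ii}>0$ forces this expression to be strictly positive (for example by locating $j$ with $a_{ij}<a_{ii}$ so that $r_j(M(1))>0$ and the sign obstruction is avoided), yielding ${\rm per}(I-\tilde A)>{\rm per}(I-A)$ and the desired contradiction with the maximality of $A$.

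I expect this last step to be the main obstacle, because the $C_t$ are built from delicate cancellations of mixed-sign row-sum products, and pinning down a universally positive term requires a careful combinatorial case analysis on which $r_j(M(k))$ actually vanish. If a clean strict inequality cannot be extracted from $C_1$ alone, I would fall back on the contributions at $t=n-1$, where only a single choice of columns survives and the surviving products factor as $\prod_j a_{j,1}$ and $(1-a_{22})\prod_j a_{j,2}$, which are much more tractable to force strictly positive under the standing hypothesis $a_{ii}>0$.
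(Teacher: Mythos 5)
Your first paragraph is precisely the paper's proof: the paper simply observes that the perturbation in Lemma~\ref{leilm1} leaves every row sum, and hence $\sigma$, unchanged, and then applies the lemma as many times as possible to clear the diagonal. So the core of your argument coincides with the paper's.

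The rest of your proposal flags a genuine subtlety, but your plan for resolving it does not go through. You are right that Lemma~\ref{leilm1} only gives a weak inequality, so iterating it proves that \emph{some} maximizer in $\tilde{\omega}_n^s$ has zero diagonal, not that \emph{every} maximizer does; the paper's own two-line proof has exactly the same limitation, and the existence form is all the later sections really need. However, neither of your candidates for a strictly positive term works in general. For $C_1=\epsilon\left[r_2(M(1))\cdots r_n(M(1))-r_2(M(2))\cdots r_n(M(2))\right]$, note that $r_j(M(1))=1-\sum_{k\neq 1}a_{jk}$ vanishes whenever row $j$ has full row sum and $a_{j1}=0$, and $r_j(M(2))$ vanishes under the analogous condition; a single row $j\geq 3$ with row sum $1$ and $a_{j1}=a_{j2}=0$ kills both products simultaneously, so $C_1=0$ can occur even with $a_{11}>0$. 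Likewise the $t=n-1$ contribution contains the factor $\prod_{j\geq 2}a_{j1}$, which is zero as soon as any single off-diagonal entry of column $1$ vanishes. Hence no fixed $C_t$ is ``universally positive,'' and the strict version of the corollary remains unproved in your write-up (as it does in the paper). If the corollary is read and used in the form ``the maximum is attained at a matrix with zero main diagonal,'' your first paragraph is already complete and is exactly the paper's argument.
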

\begin{proof}
Notice that in the proof of Lemma~\ref{leilm1}, $\tilde{A}$ and $A$ have the same row sums and $\sigma(\tilde{A})=\sigma(A)$. Applying  Lemma~\ref{leilm1} as many times as possible we get the corollary.
\end{proof}
This corollary can be viewed as a refinement of Corollary~2 in~\cite{MM} since the matrices considered here have the same summation of all entries. Due to the different method in the proof of Corollary~\ref{Leico1}, we can also refine Proposition 1 in~\cite{MM} as the following lemma.
Denote by $\tilde{\omega}_n^s$ the set of all $n\times n$ row substochastic matrices such that the sum of all entries is equal to constant $s$.
\begin{lem} \label{Leilm2}
Let $A=[a_{ij}]$ be an $n\times n$ row substochastic matrix satisfying $\sigma(A)=s$ and
$${\rm per}(I-A)=\max \{{\rm per}(I-B)\ |\ B\in\tilde{\omega}_n^s \}.$$
We have
\begin{enumerate}
\item if there exist $a_{ki}\neq 0$ and $a_{kj}\neq 0$ for distinct indices $i,j$ and $k,$ then
$${\rm per}(A(k|i))={\rm per}(A(k|j)),$$
where $A(i|j)$ is the submatrix of $A$ obtained by removing the $i$th row and $j$th column from $A.$
\item there exists a row substochastic matrix $C$ such that
\begin{enumerate}
\item all diagonal elements are zero;
\item at most one positive entry contained in each row;
\item $\sigma(A)=\sigma(C)$ and
\item ${\rm per}(I-C)={\rm per}(I-A).$
\end{enumerate}
\end{enumerate}
\end{lem}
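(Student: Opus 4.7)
The strategy is a single perturbation argument: since $A$ maximizes $\mathrm{per}(I-\cdot)$ over the convex set $\tilde{\omega}_n^s$, any one-parameter deformation of $A$ that stays inside $\tilde{\omega}_n^s$ must have vanishing first-order effect on $\mathrm{per}(I-A)$. For part (1), I would take the deformation $A_\epsilon := A + \epsilon(E_{kj}-E_{ki})$, where $E_{lm}$ denotes the standard matrix unit with a $1$ in position $(l,m)$ and zeros elsewhere. Because $a_{ki},a_{kj}>0$ and $i,j,k$ are distinct, the family $A_\epsilon$ lies in $\tilde{\omega}_n^s$ for all $\epsilon$ of sufficiently small absolute value and either sign: row sums are preserved, $\sigma$ is preserved, and nonnegativity survives small perturbations. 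Expanding $\mathrm{per}(I-A_\epsilon)$ along row $k$---the only row in which anything depends on $\epsilon$, so that the cofactor submatrices $(I-A_\epsilon)(k\mid l)$ are independent of $\epsilon$---gives $\mathrm{per}(I-A_\epsilon) = \mathrm{per}(I-A)+\epsilon\bigl[\mathrm{per}((I-A)(k\mid i))-\mathrm{per}((I-A)(k\mid j))\bigr]$. Maximality at $\epsilon=0$, together with $\epsilon$ being allowed to take either sign, forces the bracket to vanish, which is the content of (1) (naturally interpreted with submatrices of $I-A$, consistent with the way the identity will be used in (2)).

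For part (2), I would first invoke Corollary~\ref{Leico1} to assume the maximizer $A$ already has zero main diagonal. Then I would iterate the following row-collapse step: in any row $k$ containing two or more positive (necessarily off-diagonal) entries at columns $i_1,\ldots,i_m$, replace $a_{k i_1}$ by $\sum_{p=1}^m a_{k i_p}$ and zero out the remaining $a_{k i_p}$. The modified matrix $A'$ stays row substochastic (the $k$-th row sum is unchanged), has zero diagonal (since each $i_p\neq k$), and has $\sigma(A')=\sigma(A)$. A Laplace expansion of $\mathrm{per}(I-A')$ along row $k$, together with the equalities $\mathrm{per}((I-A)(k\mid i_1))=\mathrm{per}((I-A)(k\mid i_p))$ from part (1), collapses to $\mathrm{per}(I-A')=\mathrm{per}(I-A)$. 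Hence $A'$ is again a maximizer in $\tilde{\omega}_n^s$, so part (1) still applies to it, and the collapse can be iterated on the next row with multiple positive entries. After at most $n$ such steps the process produces the desired $C$.

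The main delicate point is the need for two-sided perturbation in part (1): the linear coefficient vanishes only because $\epsilon$ is permitted to take either sign, and this in turn requires the strict positivity $a_{ki},a_{kj}>0$. In part (2), the only thing to verify beyond routine bookkeeping is that each collapsed matrix is still a maximizer so that part (1) continues to apply in subsequent rounds, which is immediate from $\sigma(A')=\sigma(A)$ and $\mathrm{per}(I-A')=\mathrm{per}(I-A)$.
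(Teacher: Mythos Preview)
Your proposal is correct and follows essentially the same approach as the paper: a row-$k$ perturbation that preserves $\sigma$ and row sums, combined with the Laplace expansion along row $k$, yields part~(1), and then Corollary~\ref{Leico1} together with repeated application of part~(1) gives part~(2). The only cosmetic difference is that the paper uses a one-sided perturbation with $\epsilon=\min\{a_{ki},a_{kj}\}$ after assuming without loss of generality which cofactor is larger, whereas you use a small two-sided $\epsilon$; your observation that the relevant cofactors are those of $I-A$ rather than of $A$ is also correct and matches how the identity is actually used.
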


\begin{proof}
$(1)$ Assume that ${\rm per}(A(k|j)) \geq {\rm per}(A(k|i))$. Let $\epsilon=\min\{a_{k i}, a_{k j}\}$.
Define a matrix $B=[b_{ij}]$ to be the following row substochastic matrix:
$$b_{ki} = a_{ki}-\epsilon,\  b_{kj} = a_{kj} + \epsilon, $$
and $b_{ij} = a_{ij}$ otherwise. Then
$${\rm per}(I - B) = {\rm per}(I - A) + \epsilon({\rm per}(A(k|j)) - {\rm per}(A(k|i))).$$
Since ${\rm per}(I - A)$ is maximum and $ \epsilon> 0,$ we conclude that
$${\rm per}(A(k|i)) = {\rm per}(A(k|j)).$$

\noindent $(2)$\  It follows from Lemma~\ref{leilm1} and the result of part~$(1)$.
\end{proof}

\section{The maximum of the permanent of $I-A$}\label{sec:maxper}

In this section, we shall prove the following theorem.
\begin{thm}\label{main1}
Let $A\in \omega_n$, satisfying either
\begin{enumerate}
\item $n$ is even, or
\item $n$ is odd and $\sigma(A)\leq n-1$.
\end{enumerate}
Let $\sigma(A)=s$ and denote by $e$ the greatest even integer less than or equal to $s$. Then
\begin{equation*}
\max \{{\rm per}(I-A)| A\in\omega_n^s \}= 2^{e/2}\left[1+\left(\frac{s-e}{2}\right)^2\right].
\end{equation*}
\end{thm}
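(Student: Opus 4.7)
The plan is to reduce, via the structural results of Section~\ref{sec:Pre}, to a very specific extremal form---a direct sum of symmetric $2\times 2$ blocks---and then to perform a one-dimensional convex optimization. Since $\omega_n^s\subseteq\tilde{\omega}_n^s$ and since the matrix that will turn out to be extremal is in fact doubly substochastic, it suffices to prove the stated upper bound with $A$ ranging over the larger set $\tilde{\omega}_n^s$.

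First, by Corollary~\ref{Leico1} the extremal $A$ may be taken to have zero main diagonal, and by Lemma~\ref{Leilm2}(2) to have at most one positive entry per row. Such an $A$ is encoded by a partial function $f\colon T\to[n]$, with $T\subseteq[n]$ and $f(i)\neq i$, together with weights $a_i:=a_{i,f(i)}\in(0,1]$ satisfying $\sum_{i\in T}a_i=s$. A combinatorial analysis of which permutations contribute to $\text{per}(I-A)$---namely only those $\pi$ with $\pi(i)\in\{i,f(i)\}$ for every $i$---forces $\pi$ to act as the identity off $T$ and, on each cycle $C$ of the functional graph of $f$, as either the identity or the full cyclic shift. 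Tree vertices of this functional graph only consume the budget $s$ without contributing to the permanent, so at the maximum we may assume $f|_T$ is a permutation of $T$ (in which case $A$ is in fact doubly substochastic), yielding
\[
\text{per}(I-A)\;=\;\prod_{C}\Bigl(1+(-1)^{|C|}\prod_{i\in C}a_i\Bigr),
\]
the product taken over the cycles of $f|_T$.

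Next, I argue that at the maximum every cycle has length two. Odd cycles contribute factors $\leq 1$, so their mass may be redistributed (into existing 2-cycles whose value $x<1$, or into freshly opened 2-cycles) to strictly increase the product; and a direct comparison shows that breaking a $2k$-cycle ($k\geq 2$) into $k$ disjoint 2-cycles of the same total weight strictly raises the product. Within each 2-cycle $\{i,j\}$, the factor $1+a_ia_j$ is maximized under $a_i+a_j$ fixed at $a_i=a_j$ by the AM-GM inequality, so the blocks may be taken symmetric. Hence the extremal $A$ is, up to simultaneous row/column permutation,
\[
A \;=\; \bigoplus_{i=1}^{m}\bigl(\begin{smallmatrix}0&x_i\\x_i&0\end{smallmatrix}\bigr)\;\oplus\;0_{n-2m},\qquad x_i\in[0,1],\quad 2\sum_i x_i=s,
\]
and $\text{per}(I-A)=\prod_{i=1}^{m}(1+x_i^2)$. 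I expect this cycle-breaking step to be the main obstacle, since it requires finding enough room in an $n\times n$ matrix for the new 2-cycles; this is precisely where the hypothesis that $n$ is even, or $n$ is odd with $s\leq n-1$, is invoked, guaranteeing $e/2+1\leq\lfloor n/2\rfloor$ in the nontrivial regime.

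Finally, since $\frac{d^2}{dx^2}\log(1+x^2)=\frac{2(1-x^2)}{(1+x^2)^2}\geq 0$ on $[0,1]$, the function $\log(1+x^2)$ is convex there, so $\sum_i\log(1+x_i^2)$ subject to $\sum_i x_i=s/2$ and $x_i\in[0,1]$ attains its maximum at a vertex of the polytope: all but at most one coordinate in $\{0,1\}$. Taking $e/2$ of the $x_i$ equal to $1$ and one equal to $(s-e)/2\in[0,1)$ produces the value $2^{e/2}\bigl[1+((s-e)/2)^2\bigr]$, and the assumption on $n$ and $s$ ensures that this configuration fits inside $\omega_n^s$, establishing both that this value is an upper bound and that it is attained.
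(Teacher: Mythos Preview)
Your proposal is correct and follows the same overall architecture as the paper: first reduce via Corollary~\ref{Leico1} and Lemma~\ref{Leilm2} to matrices in $\tilde{\omega}_n^{0,1}$, then use the cycle decomposition of the functional graph to obtain the product formula (the paper's Lemmas~\ref{lm:perf}--\ref{lm:perprof}), then argue that only $2$-cycles survive at the optimum and symmetrize them by AM--GM, and finally optimize over the block weights. Your handling of tree edges, odd cycles, and long even cycles is a compressed version of the paper's Lemma~\ref{lm:neqper} and Corollary~\ref{cor:maxper}.

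The one genuine difference is the last step. The paper maximizes $\prod_i(1+x_i^2)$ under $\sum_i x_i=s/2$, $x_i\in[0,1]$, by an explicit mass-transfer argument (Lemma~\ref{lmzhi} and Lemma~\ref{Leilm5}): repeatedly move $\epsilon=\min\{1-z_1,z_n\}$ from the smallest positive coordinate to the largest non-unit coordinate and check by hand that the elementary symmetric sum in the $z_i^2$ strictly increases. You instead observe that $x\mapsto\log(1+x^2)$ is convex on $[0,1]$ (since its second derivative $2(1-x^2)/(1+x^2)^2$ is nonnegative there), so the sum $\sum_i\log(1+x_i^2)$ attains its maximum at a vertex of the polytope $\{x\in[0,1]^m:\sum x_i=s/2\}$, i.e.\ at a point with all but one coordinate in $\{0,1\}$. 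This is cleaner and replaces the paper's two technical lemmas with a one-line convexity argument; the paper's approach, on the other hand, is entirely self-contained and yields the strict inequality in Lemma~\ref{lmzhi} directly. Both arrive at the same extremizer and hence the same value $2^{e/2}[1+((s-e)/2)^2]$.
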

Theorem~\ref{main1} can also be rephrased with respect to the sub-defect $k$ as the following corollary.
\begin{cor}
Let $A\in \omega_{n,k}$, where either
\begin{enumerate}
\item $n$ is even, or
\item $n$ is odd and $k>1$.
\end{enumerate}
Denote by $e$ the greatest even integer less than or equal to $n-k+1$. Then
\begin{equation*}
\sup \{{\rm per}(I-A)|A\in\omega_{n,k}\}= 2^{e/2}\left[1+\left(\frac{n-k+1-e}{2}\right)^2\right].
\end{equation*}
\end{cor}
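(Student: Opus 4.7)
I would reduce $A$ to a canonical form via the results of Section~\ref{sec:Pre}, express ${\rm per}(I-A)$ as a product over cycles of the support of $A$, and then perform a one-variable optimization; the matching lower bound will come from an explicit block-diagonal construction. Since $\omega_n^s\subseteq \tilde\omega_n^s$, it is enough to prove the upper bound over the larger class $\tilde\omega_n^s$ and attain it inside $\omega_n^s$. Applying Corollary~\ref{Leico1} and Lemma~\ref{Leilm2}(2), I may replace $A$ by some $C\in\tilde\omega_n^s$ with zero diagonal, at most one positive entry per row, $\sigma(C)=s$, and ${\rm per}(I-C)\geq {\rm per}(I-A)$. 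Encode the support of $C$ by a partial map $f\colon[n]\to[n]$ with $f(i)\neq i$, where $f(i)$ is the unique column in which row $i$ is positive (if any).

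\textbf{Cycle-product formula.} Because each row of $C$ has at most one off-diagonal nonzero, any permutation $\pi$ contributing to the expansion of ${\rm per}(I-C)$ satisfies $\pi(i)\in\{i,f(i)\}$, and a short bijectivity argument shows its non-fixed set must be a union of cycles of $f$. Letting the cycles of $f$ be $C_1,\dots,C_m$ with lengths $k_j\geq 2$, weights $w_j:=\prod_{i\in C_j}c_{i,f(i)}$, and sums $\sigma_j:=\sum_{i\in C_j}c_{i,f(i)}$, I get
\[
{\rm per}(I-C)=\prod_{j=1}^m\bigl(1+(-1)^{k_j}w_j\bigr).
\]
Tree edges of $f$ carry weight not counted in any $\sigma_j$, so $\sum_j\sigma_j\leq s$. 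AM--GM gives $w_j\leq(\sigma_j/k_j)^{k_j}$, and odd-length factors $1-w_j$ are each $\leq 1$, yielding
\[
{\rm per}(I-C)\leq\prod_{j:\,k_j\text{ even}}\Bigl(1+(\sigma_j/k_j)^{k_j}\Bigr).
\]

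\textbf{One-variable bound.} Define $g(\sigma):=2^{e/2}\bigl(1+((\sigma-e)/2)^2\bigr)$, where $e$ is the greatest even integer $\leq \sigma$. The technical core is two elementary inequalities. \emph{Merging:} for $x,y\geq 0$ with $x+y\leq 2$, one has $(1+(x/2)^2)(1+(y/2)^2)\leq 1+((x+y)/2)^2$, since the difference reduces to $-xy(8-xy)/16\leq 0$. \emph{Per-cycle:} for every even $k\geq 2$ and $\sigma\in[0,k]$, $1+(\sigma/k)^k\leq g(\sigma)$; this is definitional for $k=2$, while for $k\geq 4$ one splits $\sigma\leq 2$ (use $2^k\leq k^k$ to get $(\sigma/k)^k\leq(\sigma/2)^2$) and $\sigma>2$ (where $1+(\sigma/k)^k\leq 2\leq g(\sigma)$ because $e\geq 2$). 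A case analysis on whether the mod-$2$ remainders of $x$ and $y$ combine into a single remainder or cross into a fresh $2$-block converts the merging inequality into super-multiplicativity $g(x)g(y)\leq g(x+y)$. Combined with monotonicity of $g$,
\[
{\rm per}(I-A)\leq \prod_j g(\sigma_j)\leq g\Bigl(\sum_j\sigma_j\Bigr)\leq g(s).
\]

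\textbf{Achievability and main obstacle.} For the matching lower bound, take $A^*\in\omega_n^s$ block-diagonal with $e/2$ blocks $\begin{pmatrix}0&1\\1&0\end{pmatrix}$ and, when $s>e$, one extra block $\begin{pmatrix}0&(s-e)/2\\(s-e)/2&0\end{pmatrix}$, padded by zeros. Then $A^*$ is doubly substochastic, $\sigma(A^*)=s$, and a block-diagonal permanent computation gives ${\rm per}(I-A^*)=g(s)$. The hypotheses on $n$ are precisely what is needed so that the at most $e+2$ nonzero rows of $A^*$ fit inside $[n]$: for $n$ even always $e\leq n$ and if needed $e+2\leq n$; for $n$ odd with $s\leq n-1$ either $e\leq n-3$ and $e+2\leq n-1$, or $e=n-1$ forces $s=e$ with no extra block needed. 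The main obstacle is the one-variable step, in particular the super-multiplicativity of $g$ in the sub-case where the remainders of $x$ and $y$ sum to at least $2$: this reduces to the nontrivial inequality $(1+(x/2)^2)(1+(y/2)^2)\leq 2\bigl(1+((x+y-2)/2)^2\bigr)$ on $x,y\in[0,2]$ with $x+y\geq 2$, which is tight along $x=2$ (and symmetrically $y=2$).
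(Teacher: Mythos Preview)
Your argument is essentially a correct direct proof of Theorem~\ref{main1}, whereas the paper treats this Corollary as an immediate rephrasing of Theorem~\ref{main1} in terms of the sub-defect, using only that $\omega_{n,k}=\bigcup_{n-k\le s<n-k+1}\omega_n^s$ together with the fact that the bound $g(s)=2^{e/2}\bigl[1+((s-e)/2)^2\bigr]$ is increasing and continuous in $s$; no separate argument is given. Where you genuinely diverge from the paper is in the optimization step behind Theorem~\ref{main1}: after the same reduction to $\tilde\omega_n^{0,1}$ and the same cycle-product formula (the paper's Lemma~\ref{lm:perprof}), the paper pairs entries into $2$-cycles (Lemma~\ref{lm:neqper}, Corollary~\ref{cor:maxper}), reduces to maximizing $\prod_i(1+z_i^2)$ subject to $\sum z_i=\bar s$, and then runs a discrete shifting argument (Lemmas~\ref{lmzhi}--\ref{Leilm5}) that pushes the $z_i$ to $\{0,1\}$. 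You instead bound each even cycle factor by $g(\sigma_j)$ and combine via the super-multiplicativity $g(x)g(y)\le g(x+y)$, which is slicker but forces you to verify the nontrivial case $(1+(x/2)^2)(1+(y/2)^2)\le 2\bigl(1+((x+y-2)/2)^2\bigr)$ on $x,y\in[0,2]$ with $x+y\ge 2$. Both routes yield the same block-diagonal extremizer; the paper's shifting is more mechanical, while your inequality packaging is more self-contained. One small omission: you conclude $\max_{\omega_n^s}{\rm per}(I-A)=g(s)$ under the Theorem~\ref{main1} hypotheses but never explicitly take the supremum over $s\in[n-k,n-k+1)$ to obtain $\sup_{\omega_{n,k}}{\rm per}(I-A)=g(n-k+1)$, nor do you note that the Corollary's hypothesis ``$n$ odd and $k>1$'' places every such $s$ below $n-1$ so that Theorem~\ref{main1} applies; both points are routine but should be stated.
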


%\begin{thm} \label{main1} Let $B\in \omega_n^s.$ Denote $\lfloor \frac{s}{2} \rfloor = t.$ If $s\leq n-1,$ then
%$$\max_{B\in \omega_n^s} {\rm per}(I+B)=2^t(1+(\frac{s-2t}{2})^2). $$
%\end{thm}

For any $n$-square matrix $A$, we can always associate with $A$ a directed weighted graph $G_A$ with $n$ vertices $\{v_1, v_2, \ldots, v_n\}$, such that $a_{i j}\neq 0$ if and only if there is a directed edge (vector) $\overrightarrow{V_i V_j}$ with weight $a_{i j}$. Let $\tilde{\omega}_n^{0,1}$ be the set of all $n\times n$ row substochastic matrices with zero diagonal and at most one positive element in each row. For $A\in\tilde{\omega}_n^{0,1}$, since each row of $A$ contains at most one positive element, there is at most one directed edge starting from each vertex in $G_A$. Consider the matrix $P=I-A$, the corresponding directed graph $G_P$ can be obtained from $G_A$ by adding a negative sign before each weight and also adding a ``loop" $\overrightarrow{V_i V_i}$ with weight $1$ at each vertex $V_i$. From definition, we know that
\begin{equation*}
{\rm per} P = {\rm per} (I-A)=\sum_{\pi\in S_n} p_{1\pi(1)}p_{2\pi(2)}\cdots p_{n\pi(n)},
\end{equation*}
where the sum is over all permutations $\pi\in S_n$. Since each permutation can also be written in cycle notation, we can write
\begin{equation}
\pi=(i_1 i_2\cdots i_{r_1})(i_{r_1+1} i_{r_1+2}\cdots i_{r_2})\cdots (i_{r_{l-1}+1} i_{r_{l-1}+2}\cdots i_{r_{l(\pi)}}),
\end{equation}
where $l=l(\pi)$ for convenience, and thus
\begin{equation*}
{\rm per} (I-A)=\sum_{\pi\in S_n} (p_{i_1 i_2}p_{i_2 i_3}\cdots p_{i_{r_1} i_1})
                                  (p_{i_{r_1+1} i_{r_1+2}}\cdots p_{i_{r_2} i_{r_1+1}})
                                  \cdots (p_{i_{r_{l-1}+1} i_{r_{l-1}+2}}\cdots p_{i_{r_{l(\pi)}} i_{r_{l-1}+1}}).
\end{equation*}
Therefore for $0\leq t\leq l(\pi)-1$ and in case $t=0$ setting $r_0=0$, each cycle $(i_{r_t+1} i_{r_t+2} \cdots i_{r_{t+1}})$ in permutation $\pi$ corresponds to a directed cycle in $G_P$:
\begin{equation*}
\xymatrix{
V_{i_{r_t+1}} \ar[rr]^{p_{i_{r_t+1} i_{r_t+2}}} && V_{i_{r_t+2}} \ar[r] & \cdots \ar[r] & V_{i_{r_{t+1}}} \ar[rr]^{p_{i_{r_{t+1}} i_{r_{t}+1}}} && V_{i_{r_{t}+1}} }.
\end{equation*}
Clearly, the term $(p_{i_1 i_2}p_{i_2 i_3}\cdots p_{i_{r_1} i_1})(p_{i_{r_1+1} i_{r_1+2}}\cdots p_{i_{r_2} i_{r_1+1}})
\cdots (p_{i_{r_{l-1}+1} i_{r_{l-1}+2}}\cdots p_{i_{r_{l(\pi)}} i_{r_{l-1}+1}})$
does not vanish if and only if the weights in each directed circle are all positive. Now we can prove the following theorem.
\begin{lem}
For $A\in\tilde{\omega}_n^{0,1}$ such that the corresponding directed graph $G_A$ has only one connected component, then
\begin{enumerate}
\item ${\rm per}(I-A)=1$ if there is no directed cycle in $G_A$;

\item ${\rm per}(I-A)=1+(-1)^{r}a_{i_1 i_2}a_{i_2 i_3}\cdots a_{i_r i_1}$ if there is a cycle with length $r$ and $a_{i_1 i_2}, a_{i_2 i_3}, \ldots, a_{i_r i_1}$ are weights of edges in the cycle
\begin{equation*}
\xymatrix{V_{i_{1}} \ar[rr]^{a_{i_{1} i_{2}}} && V_{i_{2}} \ar[r] & \cdots \ar[r] & V_{i_{r}} \ar[rr]^{a_{i_{r} i_{1}}} && V_{i_{1}} }.
\end{equation*}
\end{enumerate}
\end{lem}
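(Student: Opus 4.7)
The plan is to expand ${\rm per}(I-A)=\sum_{\pi\in S_n}\prod_{i=1}^n p_{i,\pi(i)}$, where $p_{ij}$ are the entries of $P=I-A$, and decompose each permutation by its cycle structure, as is already previewed in the excerpt. Because $A$ has zero diagonal, $p_{ii}=1$, so fixed points of $\pi$ contribute a factor of $1$. Each non-trivial cycle $(j_1\,j_2\,\cdots\,j_r)$ of $\pi$ contributes $\prod_{k=1}^{r}(-a_{j_k j_{k+1}})=(-1)^{r} a_{j_1 j_2}a_{j_2 j_3}\cdots a_{j_r j_1}$ (indices taken mod $r$), and this factor is non-vanishing precisely when $V_{j_1}\to V_{j_2}\to\cdots\to V_{j_r}\to V_{j_1}$ is a directed cycle in $G_A$ with all positive weights. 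Thus the whole sum reduces to a sum over permutations whose non-trivial cycles are themselves directed cycles of $G_A$.

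The central structural fact to establish is that the (weakly) connected graph $G_A$ contains at most one directed cycle. Since $A\in\tilde{\omega}_n^{0,1}$, every vertex of $G_A$ has out-degree at most $1$, so $|E(G_A)|\leq n$. Any directed cycle uses the unique out-edge at each of its vertices, so distinct directed cycles must be vertex-disjoint, and vertices on a cycle cannot send any edge outside the cycle. On the other hand, weak connectivity forces $|E(G_A)|\geq n-1$, so the underlying undirected multigraph contains at most one cycle, and hence $G_A$ contains at most one directed cycle. (Equivalently: if two vertex-disjoint directed cycles coexisted, the out-edges emanating from the remaining vertices could only flow into one of them at a time, so no undirected path could bridge the two components of the out-map, contradicting weak connectivity.)

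With this uniqueness in hand the lemma follows by inspection. In case (1), there is no directed cycle in $G_A$, so the only $\pi\in S_n$ whose product $\prod_i p_{i,\pi(i)}$ is nonzero is the identity, giving ${\rm per}(I-A)=1$. In case (2), the unique directed cycle of $G_A$ is $(i_1\,i_2\,\cdots\,i_r)$, so the only permutations with a nonzero contribution are the identity (contributing $1$) and the permutation whose unique non-trivial cycle is $(i_1\,i_2\,\cdots\,i_r)$, with every other index a fixed point (contributing $(-1)^r a_{i_1 i_2}a_{i_2 i_3}\cdots a_{i_r i_1}$). Summing the two gives the formula.

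The main obstacle is the uniqueness-of-cycle step; the rest is direct bookkeeping of the cycle expansion of the permanent. I would present that step as a short standalone claim phrased in the language of functional digraphs, using the crude edge count $n-1\leq |E(G_A)|\leq n$ to keep the argument concise.
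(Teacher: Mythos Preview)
Your proposal is correct and follows essentially the same route as the paper: both expand ${\rm per}(I-A)$ via the cycle decomposition of permutations, observe that only directed cycles of $G_A$ can contribute nonzero factors, and then use the out-degree-$\leq 1$ structure to conclude that a connected $G_A$ has at most one directed cycle. The paper simply asserts the uniqueness-of-cycle step as an immediate consequence of the out-degree bound, whereas you supply the edge-count justification $n-1\le |E(G_A)|\le n$; this is a minor elaboration rather than a different strategy.
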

\begin{proof}
Since $A\in\tilde{\omega}^{0,1}$ has at most one positive element in each row, there is at most one vector pointing out from each vertex in $G_A$. Therefore every connected component contains either no cycles or at most one directed cycle. If there is no cycles in $G_A$, then $G_{I-A}$ contains no nontrivial directed cycles. Thus there is only one nonzero term in the expansion of ${\rm per}(I-A)$, which corresponds to $id\in S_n$, i.e., all loops in $G_{I-A}$. So in this case ${\rm per}(I-A)=1$. Suppose $G_A$ contains one directed cycle $C$ with $r$ edges. The weights in the cycle are $a_{i_1 i_2}, a_{i_2 i_3}, \ldots, a_{i_r i_1}$. Then there are two nonzero terms in the expansion of ${\rm per}(I-A)$ which correspond to $id\in S_n$ and $(i_1 i_2 \cdots i_r)$ respectively. Therefore in this case
\begin{equation*}
{\rm per}(I-A)=1+(-1)^r a_{i_1 i_2}a_{i_2 i_3}\cdots a_{i_r i_1}.
\end{equation*}
\end{proof}
For $C$ a cycle in $G_A$ consisting of $r$ edges with weights $a_{i_1 i_2}, a_{i_2 i_3},\ldots, a_{i_r i_1}$, denote the length of $C$ by $l(C)$ and then $l(C)=r$. Let $1+(-1)^{l(C)}C=1+(-1)^r a_{i_1 i_2}a_{i_2 i_3}\ldots a_{i_r i_1}$. We give some results of ${\rm per}(I-A)$ for general $A\in\tilde{\omega}_n^{0,1}$.
\begin{lem}\label{lm:perf}
Let $A\in\tilde{\omega}_n^{0,1}$. If $G_A$ contains a cycle $C$ which consists of edges with weights $a_{i_1 i_2}, a_{i_2 i_3},\ldots, a_{i_r i_1}$, then
\begin{align*}
{\rm per}(I-A)&={\rm per}(I-\tilde{A})\left(1+(-1)^r a_{i_1 i_2}a_{i_2 i_3}\cdots a_{i_r i_1}\right)\\
              &={\rm per}(I-\tilde{A})(1+(-1)^{l(C)}C),
\end{align*}
where $\tilde{A}$ is the submatrix obtained by removing the $r$ rows and columns containing $a_{i_1 i_2}, a_{i_2 i_3},\ldots, a_{i_r i_1}$.
\end{lem}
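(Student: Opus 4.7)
The plan is to unfold $\mathrm{per}(I-A)$ as the sum $\sum_{\pi\in S_n}\prod_i p_{i\,\pi(i)}$ over permutations, where $P=I-A$, and exploit the fact that the cycle vertices $V_{i_1},\dots,V_{i_r}$ each have a unique outgoing edge in $G_A$. Because $A\in\tilde\omega_n^{0,1}$, row $i_t$ of $P$ has at most two nonzero entries: the diagonal $p_{i_t i_t}=1$ and the cycle entry $p_{i_t\,i_{t+1}}=-a_{i_t i_{t+1}}$ (indices mod $r$). So in every nonvanishing term of $\mathrm{per}(I-A)$, the value $\pi(i_t)$ is forced to lie in $\{i_t,i_{t+1}\}$ for each $t$.

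Next I would carry out a clean dichotomy. Suppose $\pi(i_s)=i_{s+1}$ for some $s$. Then $i_{s+1}$ already has the preimage $i_s$, so $\pi(i_{s+1})\neq i_{s+1}$, forcing $\pi(i_{s+1})=i_{s+2}$; iterating, $\pi$ must contain the full cycle $(i_1\,i_2\,\cdots\,i_r)$. Otherwise $\pi$ fixes every cycle vertex. In either case, because each cycle vertex already has one preimage inside the cycle (namely itself in the first case, or $i_{t-1}$ in the second), no outside vertex $j$ can satisfy $\pi(j)\in\{i_1,\dots,i_r\}$. Hence $\pi$ restricts to a bijection on the complement, and the contribution of those indices to the product is exactly an arbitrary term of $\mathrm{per}(I-\tilde A)$, where $\tilde A$ is the submatrix after deleting rows and columns indexed by $\{i_1,\dots,i_r\}$.

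Summing over the two cases then gives
\begin{align*}
\mathrm{per}(I-A)
&=\Bigl(\prod_{t=1}^r p_{i_t i_t}\Bigr)\mathrm{per}(I-\tilde A)
 +\Bigl(\prod_{t=1}^r p_{i_t\,i_{t+1}}\Bigr)\mathrm{per}(I-\tilde A)\\
&=\bigl(1+(-1)^r a_{i_1 i_2}a_{i_2 i_3}\cdots a_{i_r i_1}\bigr)\mathrm{per}(I-\tilde A),
\end{align*}
which is the desired identity.

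The only subtle point is the preimage-counting step that rules out outside vertices mapping into the cycle; everything else is formal bookkeeping once one observes that rows $i_1,\dots,i_r$ of $P$ each have the two-entry support described above. I expect writing this preimage argument precisely (and noting that both cases yield the \emph{same} permanent $\mathrm{per}(I-\tilde A)$ on the outside block, regardless of how incoming edges $a_{j,i_t}$ from outside the cycle behave) to be the main thing worth spelling out carefully.
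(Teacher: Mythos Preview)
Your proposal is correct and is essentially the same argument as the paper's, just written out in full: the paper's proof is a single sentence observing that the only permutations contributing a nonzero term are those whose restriction to $\{i_1,\dots,i_r\}$ is either the identity or the $r$-cycle $(i_1\,i_2\,\cdots\,i_r)$, and your row-support and preimage analysis is exactly the justification of that claim. One tiny slip: when you say ``namely itself in the first case, or $i_{t-1}$ in the second,'' the labels are swapped (your first case is the full cycle, where the preimage of $i_t$ is $i_{t-1}$, and the second is the fixing case, where the preimage is $i_t$ itself).
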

\begin{proof}
From the definition of ${\rm per}(I-A)$ we know that the nonzero terms in the expansion correspond to permutations haing cycles either $(i_1 i_2 \cdots i_r)$ or $(i_1)(i_2)\cdots (i_r)$. Therefore the lemma holds.
\end{proof}
For $A\in\tilde{\omega}_n^{0,1}$, by removing the rows and columns containing $a_{i_1 i_2}, a_{i_2 i_3},\ldots, a_{i_r i_1}$ we get the submatrix $\tilde{A}$, which is in $\tilde{\omega}_{n-r}^{0,1}$. Thus keep applying Lemma~\ref{lm:perf} and we get the following lemma which expresses ${\rm per} (I-A)$ into the product of factors $(1+(-1)^{l(C)}C)$.
\begin{lem}\label{lm:perprof}
Let $A\in\tilde{\omega}_n^{0,1}$ whose corresponding graph contains $k$ cycles $C_1, C_2, \ldots, C_k$ in total, then
\begin{equation}\label{eq:perprod}
{\rm per}(I-A)=\prod_{i=1}^{k}\left(1+(-1)^{l(C_i)}C_i\right).
\end{equation}
\end{lem}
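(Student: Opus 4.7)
The plan is to induct on $k$, the number of cycles in $G_A$, using Lemma~\ref{lm:perf} as the main peeling step.

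For the base case $k=0$, the graph $G_A$ has no cycle, so each connected component of $G_A$ contains no directed cycle. Either by directly invoking the previous lemma (part (1)) component-by-component, or by observing that the only nonzero term in the expansion of ${\rm per}(I-A)$ comes from the identity permutation (all loops of weight $1$ in $G_{I-A}$), we conclude ${\rm per}(I-A)=1$, which matches the empty product on the right-hand side of \eqref{eq:perprod}.

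For the inductive step, the key structural observation is that the cycles $C_1,\ldots,C_k$ in $G_A$ are pairwise vertex-disjoint. Indeed, since $A\in\tilde{\omega}_n^{0,1}$ has at most one positive entry per row, each vertex of $G_A$ has out-degree at most $1$; if a vertex lies on some cycle, the entire cycle is then determined by following the unique outgoing edges, so no vertex can belong to two different cycles. Consequently, if I pick the cycle $C_k$ and let $\tilde{A}$ be the submatrix obtained by deleting the rows and columns indexed by the vertices of $C_k$, then $\tilde{A}\in\tilde{\omega}_{n-l(C_k)}^{0,1}$ and its associated graph $G_{\tilde{A}}$ contains exactly the remaining cycles $C_1,\ldots,C_{k-1}$ (none of them are disturbed by the deletion, precisely because they share no vertex with $C_k$).

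With this in hand, I apply Lemma~\ref{lm:perf} to pull off the factor corresponding to $C_k$:
\begin{equation*}
{\rm per}(I-A)={\rm per}(I-\tilde{A})\bigl(1+(-1)^{l(C_k)}C_k\bigr),
\end{equation*}
and then invoke the inductive hypothesis on $\tilde{A}$, which has strictly fewer cycles, to obtain
\begin{equation*}
{\rm per}(I-\tilde{A})=\prod_{i=1}^{k-1}\bigl(1+(-1)^{l(C_i)}C_i\bigr).
\end{equation*}
Multiplying gives the desired product formula. The only non-routine step is the vertex-disjointness of the cycles, but as noted this is immediate from the out-degree-at-most-one condition; everything else is a clean unwinding of Lemma~\ref{lm:perf}.
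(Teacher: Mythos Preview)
Your proof is correct and follows exactly the approach the paper indicates: the paper simply says ``keep applying Lemma~\ref{lm:perf}'' to obtain the product formula, and your induction on $k$ is precisely the formalization of that iteration. The vertex-disjointness of the cycles (which you justify via the out-degree condition) is the implicit fact that makes this iteration work, so your write-up is a fuller version of the same argument rather than a different one.
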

\begin{lem}\label{lm:neqper}
Let $A$ be an $n$-square row substochastic matrix with at most one positive entry contained in each row.
\begin{enumerate}
\item If $A$ contains even number of positive elements, then
\begin{equation*}
{\rm per}(I-A)\leq (1+x_1x_2)(1+x_3x_4)\cdots(1+x_{2t-1}x_{2t})
\end{equation*}
where $x_1, x_2, \ldots, x_{2t}$ is a labeling of the even positive elements in $A$.
\item If $A$ contains odd number of positive elements, then
\begin{equation*}
{\rm per}(I-A)< (1+x_1x_2)(1+x_3x_4)\cdots(1+x_{2t-1}x_{2t})(1+\frac{x_{2t+1}^2}{4})
\end{equation*}
where $x_1, x_2, \ldots, x_{2t} , x_{2t+1}$ is a labeling of the odd positive elements in $A$.
\end{enumerate}
\end{lem}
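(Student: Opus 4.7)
The plan is to combine the product formula from Lemma~\ref{lm:perprof} with two very simple bounds on each cycle factor, and then to choose the labeling cleverly so that the required pairing inequality drops out. First I would apply Lemma~\ref{lm:perprof} to obtain
\begin{equation*}
{\rm per}(I-A)=\prod_{i=1}^{k}\bigl(1+(-1)^{l(C_i)}C_i\bigr),
\end{equation*}
where $C_1,\ldots,C_k$ are the directed cycles of $G_A$ and $C_i$ is the product of the weights along cycle $i$. Since $A$ is row substochastic with at most one positive entry per row, every weight lies in $[0,1]$, so each $C_i\in[0,1]$. I would then use two bounds: for an odd-length cycle, $1-C_i\le 1$; for an even-length cycle of length $l_i$ with edge weights $y_1^{(i)},\ldots,y_{l_i}^{(i)}$, an expansion gives
\begin{equation*}
1+y_1^{(i)} y_2^{(i)}\cdots y_{l_i}^{(i)}\le \prod_{j=1}^{l_i/2}\bigl(1+y_{2j-1}^{(i)}y_{2j}^{(i)}\bigr),
\end{equation*}
because the right-hand side equals $1+y_1^{(i)}\cdots y_{l_i}^{(i)}$ plus a sum of nonnegative cross terms. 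Plugging these into the product yields
\begin{equation*}
{\rm per}(I-A)\le \prod_{i:\,l(C_i)\text{ even}}\prod_{j=1}^{l_i/2}\bigl(1+y_{2j-1}^{(i)}y_{2j}^{(i)}\bigr).
\end{equation*}

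Next I would specify the labeling of the positive entries of $A$. I list them in two blocks: first the edges inside even-length cycles, grouped consecutively into the pairs $(y_{2j-1}^{(i)},y_{2j}^{(i)})$ that appeared above (this block has length $\sum_{l_i\text{ even}}l_i$, which is even); then the remaining positive entries, namely edges inside odd-length cycles together with the tree edges outside all cycles. Since each odd $l_i$ is odd, the parity of the number of leftover entries is the same as the parity of the total number of positive entries. Hence in the even-total case the leftover block has even length and the whole list splits as consecutive pairs $(x_1,x_2),(x_3,x_4),\ldots,(x_{2t-1},x_{2t})$; in the odd-total case the leftover block has odd length, so the final entry $x_{2t+1}$ is alone.

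Finally, since every factor $1+x_{2j-1}x_{2j}$ is $\ge 1$, the subproduct indexed by the even-cycle pairs is bounded above by the full pairing product, so
\begin{equation*}
\prod_{j=1}^{t}(1+x_{2j-1}x_{2j})\ge \prod_{i:\,l(C_i)\text{ even}}\prod_{j=1}^{l_i/2}\bigl(1+y_{2j-1}^{(i)}y_{2j}^{(i)}\bigr)\ge {\rm per}(I-A),
\end{equation*}
which settles the even case. In the odd case the same subproduct argument gives the nonstrict bound on $\prod_{j=1}^{t}(1+x_{2j-1}x_{2j})$, and multiplying by the extra factor $1+x_{2t+1}^{2}/4>1$ (strictly greater than $1$ because $x_{2t+1}>0$) produces the desired strict inequality. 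The only step that requires real care is the parity accounting in the labeling; the rest is the product formula in Lemma~\ref{lm:perprof} together with the routine expansion inequality for even-cycle factors.
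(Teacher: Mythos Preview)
Your proof is correct and follows essentially the same approach as the paper: apply Lemma~\ref{lm:perprof}, bound each even-cycle factor by the product of its consecutive pairs via the expansion inequality, bound each odd-cycle factor by~$1$, and then choose a labeling that places the even-cycle pairs first and the remaining positive entries afterward. The only cosmetic difference is that the paper, when there are two odd cycles, explicitly pairs the last edge of one with the first edge of the next (producing a bridging factor $(1+x_rx_{r+1})$), whereas you simply discard all odd-cycle factors via $1-C_i\le 1$ and absorb their edges into the leftover block; your organization is slightly cleaner and avoids that extra case distinction.
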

\begin{proof}
Since by Lemma~\ref{lm:perprof}, we can write ${\rm per}(I-A)$ into products as in equation~\eqref{eq:perprod}.
We first label the elements appearing in~\eqref{eq:perprod} by $x_1, x_2,\ldots$, and then we label the remaining positive elements left in $A$.
For example we can relabel the sequence $a_{i_1 i_2}, a_{i_2 i_3}, a_{i_3 i_4}, \ldots, a_{i_r i_1}$ appearing in one factor $(1+(-1)^r a_{i_1 i_2}a_{i_2 i_3}\cdots a_{i_r i_1})$ by $x_1, x_2, \cdots, x_r$, respectively. Such a sequence forms a factor $(1+(-1)^rx_1x_2 \cdots x_r)$ in ${\rm per}(I-A)$. If $r$ is even, then
\begin{align*}
1+(-1)^rx_1x_2 \cdots x_r\leq (1+x_1x_2)(1+x_3x_4)\cdots(1+x_{r-1}x_r).
\end{align*}
If $r$ is odd, then
\begin{align*}
1+(-1)^rx_1x_2 \cdots x_r &=1-x_1x_2 \cdots x_r \\
                          &<(1+x_1x_2)(1+x_3x_4)\cdots(1+x_{r-2}x_{r-1})\\
                          &<(1+x_1x_2)(1+x_3x_4)\cdots(1+x_{r-2}x_{r-1})(1+\frac{x_r^2}{4}).
\end{align*}
Also notice that for $r$ and $r'$ odd, we have
\begin{align*}
&(1+(-1)^rx_1x_2 \cdots x_r)(1+(-1)^rx_{r+1}x_{r+2} \cdots x_{r+r'})\\
                           &<(1+x_1x_2)\cdots(1+x_{r-2}x_{r-1})(1+x_{r}x_{r+1})\cdots(1+x_{r+r'-1}x_{r+r'}).
\end{align*}
Therefore the lemma holds.
\end{proof}

\begin{cor}\label{cor:maxper}
Let $A$ be an $n$-square row substochastic matrix with at most one positive entry contained in each row.
\begin{enumerate}
\item If $A$ contains even number of positive elements, and $x_1, x_2, \ldots, x_{2t}$ is a labeling of the $2t$ positive elements in $A$ for $0\leq 2t\leq n$, then $A$ can be permutated by some permutation matrices to the following form
\begin{equation}\label{eq2}
\begin{pmatrix}0 & x_1 \\ x_2 & 0
\end{pmatrix}\oplus
\begin{pmatrix}0 & x_3 \\ x_4 & 0
\end{pmatrix}\oplus\ldots\oplus\begin{pmatrix} 0 & x_{2t-1} \\ x_{2t} & 0 \end{pmatrix}\oplus \mathbf{0}_{n-2t}.
\end{equation}

\item If $A$ contains odd number of positive elements, and $x_1, x_2, \ldots, x_{2t} , x_{2t+1}$ is a labeling of the $2t+1$ positive elements in $A$ for $1\leq 2t+1 \leq n-1$, then we can construct another $n$-square row substochastic matrix $\tilde{A}$ satisfying $\sigma(\tilde{A})=\sigma(A)$. Also $\tilde{A}$ can be permutated by some permutation matrices to the following form
\begin{equation}\label{eq3}
\begin{pmatrix}0 & x_1 \\ x_2 & 0  \end{pmatrix}\oplus\begin{pmatrix}0 & x_3 \\ x_4 & 0  \end{pmatrix}\oplus\ldots\oplus\begin{pmatrix}0 & x_{2t-1} \\ x_{2t} & 0  \end{pmatrix}\oplus \begin{pmatrix}0 & x_{2t+1}/2 \\ x_{2t+1}/2 & 0  \end{pmatrix}\oplus \mathbf{0}_{n-2t-2}
\end{equation}
such that
\begin{equation*}
{\rm per}(I-A)< {\rm per}(I-\tilde{A}).
\end{equation*}
\end{enumerate}
\end{cor}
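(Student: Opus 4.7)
The plan is to verify both parts by explicitly exhibiting the block-diagonal matrices displayed in \eqref{eq2} and \eqref{eq3}, computing $\mathrm{per}(I-\cdot)$ of each via multiplicativity of the permanent over direct summands, and then invoking Lemma~\ref{lm:neqper} to obtain the asserted (weak or strict) permanent comparison with $A$.

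For part (1), I let $B$ denote the block-diagonal matrix on the right-hand side of \eqref{eq2}; i.e.\ $B$ is the direct sum of the $t$ anti-diagonal $2\times 2$ blocks $\bigl(\begin{smallmatrix}0 & x_{2i-1}\\ x_{2i} & 0\end{smallmatrix}\bigr)$ together with a zero block of order $n-2t$. Each $I_{2}-\bigl(\begin{smallmatrix}0 & x_{2i-1}\\ x_{2i} & 0\end{smallmatrix}\bigr)$ has permanent $1+x_{2i-1}x_{2i}$ and $I_{n-2t}$ has permanent $1$, so multiplicativity of the permanent over direct summands yields
$$
\mathrm{per}(I-B)=\prod_{i=1}^{t}(1+x_{2i-1}x_{2i}).
$$
Lemma~\ref{lm:neqper}(1) shows that this product is an upper bound for $\mathrm{per}(I-A)$, and thus the matrix $B$ -- built by placing the labeled entries $x_{1},\ldots,x_{2t}$ into the stated anti-diagonal positions and zeros elsewhere -- realizes the displayed block form with $\sigma(B)=\sigma(A)$ and $\mathrm{per}(I-A)\le\mathrm{per}(I-B)$.

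For part (2), I define $\tilde A$ exactly as in \eqref{eq3}: the first $t$ blocks carry the labeled entries $x_{1},\ldots,x_{2t}$ just as for $B$, the $(t+1)$st block is $\bigl(\begin{smallmatrix}0 & x_{2t+1}/2\\ x_{2t+1}/2 & 0\end{smallmatrix}\bigr)$, and the remainder is a zero block of order $n-2t-2$. The hypothesis $1\le 2t+1\le n-1$ gives $n-2t-2\ge 0$, so there is room for this final block alongside the zero padding. Three routine checks then apply: (a) $\tilde A$ is row substochastic, since each positive row sum is some $x_{j}\le 1$ or $x_{2t+1}/2\le 1/2$, using $x_{i}\le 1$ (each $x_{i}$ sits alone in a row of the row substochastic matrix $A$ and is bounded by that row's sum); (b) $\sigma(\tilde A)=\sum_{i=1}^{2t}x_{i}+2\cdot(x_{2t+1}/2)=\sigma(A)$; (c) $\tilde A$ has zero diagonal and at most one positive entry per row. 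The same block-by-block computation yields
$$
\mathrm{per}(I-\tilde A)=\Bigl(\prod_{i=1}^{t}(1+x_{2i-1}x_{2i})\Bigr)\Bigl(1+\tfrac{x_{2t+1}^{2}}{4}\Bigr),
$$
which by Lemma~\ref{lm:neqper}(2) strictly exceeds $\mathrm{per}(I-A)$.

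The heavy lifting has been done by Lemma~\ref{lm:neqper}, so no delicate estimation remains; what is left is only bookkeeping, and I anticipate no substantive obstacle. The one detail that must be tracked carefully is the dimension count $n-2t-2\ge 0$ in part (2) so that the block decomposition in \eqref{eq3} is well defined, together with the observation that each labeled entry $x_{i}$ is at most $1$ so that $\tilde A$ does not violate row substochasticity after splitting $x_{2t+1}$ into two equal halves.
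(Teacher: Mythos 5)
Your proof is correct and follows essentially the same route as the paper: exhibit the block-diagonal matrix of \eqref{eq2} (resp.\ \eqref{eq3}), compute $\mathrm{per}(I-\cdot)$ block by block via multiplicativity over direct summands, and invoke Lemma~\ref{lm:neqper} for the comparison with $\mathrm{per}(I-A)$. You supply more bookkeeping than the paper's two-line argument --- the bound $x_i\le 1$, the verification $\sigma(\tilde A)=\sigma(A)$, and the dimension count $n-2t-2\ge 0$ --- but the underlying idea is identical.
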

\begin{proof}
(1) It is not difficult to see that the matrix in~\eqref{eq2} maximizes the value of ${\rm per}(I-A)$, which is equal to $(1+x_1x_2)(1+x_3x_4)\cdots(1+x_{2t-1}x_{2t})$.

(2) Substituting~\eqref{eq3} to ${\rm per}(I-\tilde{A})$ we get
\begin{equation*}
{\rm per}(I-\tilde{A})=(1+x_1x_2)(1+x_3x_4)\cdots(1+x_{2t-1}x_{2t})(1+\frac{x_{2t+1}^2}{4}).
\end{equation*}
By Lemma~\ref{lm:neqper} (2) we know that
${\rm per}(I-A)< {\rm per}(I-\tilde{A})$.
\end{proof}
We can then get a property of row substochastic matrices.
\begin{cor}
For $n$ even and $A$ an $n$-square row substochastic matrix, we have
\begin{equation}\label{eq1}{\rm per}(I-A)\leq (1+x_1x_2)(1+x_3x_4)\cdots(1+x_{n-1}x_n)\end{equation} where $x_1, x_2, \ldots, x_n$ is a labeling of the row sums of $A$.
\end{cor}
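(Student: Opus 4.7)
The strategy is to reduce $A$ to a matrix in $\tilde{\omega}_n^{0,1}$ with the same row sums, and then exploit the cycle structure of its functional graph. First I would apply Lemma~\ref{leilm1} repeatedly, shifting each nonzero diagonal entry $a_{ii}$ into some off-diagonal entry $a_{ij}$ of the same row; this leaves every row sum unchanged, yields a zero diagonal, and does not decrease ${\rm per}(I-A)$. Next, I would iterate the swap from the proof of Lemma~\ref{Leilm2}(1): whenever row $k$ of the current matrix has two positive off-diagonal entries $a_{ki}$ and $a_{kj}$, form $B$ by setting $b_{ki}=a_{ki}-\epsilon$ and $b_{kj}=a_{kj}+\epsilon$ with $\epsilon=\pm\min(a_{ki},a_{kj})$ chosen so that ${\rm per}(I-B)\geq {\rm per}(I-A)$. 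This annihilates one of the two entries while preserving every row sum. After finitely many steps we obtain $A'\in \tilde{\omega}_n^{0,1}$ with zero diagonal, the same row sums as $A$, and ${\rm per}(I-A')\geq {\rm per}(I-A)$.

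Now by Lemma~\ref{lm:perprof}, ${\rm per}(I-A')=\prod_{i=1}^{k}\bigl(1+(-1)^{l(C_i)}C_i\bigr)$, where $C_1,\ldots,C_k$ are the cycles of $G_{A'}$ and $C_i$ also denotes the product of edge weights along $C_i$. I would construct the labeling of the $n$ row sums as follows: inside each cycle pair adjacent edge weights $(w_1,w_2),(w_3,w_4),\ldots$; in every odd-length cycle exactly one weight is left over. Gather these leftovers together with the weights of the ``tails'' of $G_{A'}$ (positive entries whose edges do not lie on any cycle) and with the zero row sums into a residual pool $L$, and pair up the elements of $L$ arbitrarily.

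Two checks are then required. First, $|L|$ equals the number of odd cycles plus the number of vertices of $G_{A'}$ not on any cycle, and a short parity count using $n$ even together with $\sum_i l(C_i)\equiv (\text{number of odd cycles})\pmod{2}$ shows $|L|$ is even, so a pairing of $L$ exists. Second, the resulting product of pair factors dominates ${\rm per}(I-A')$. For each even-length cycle, expanding $\prod_j(1+w_{2j-1}w_{2j})$ produces $1+w_1w_2\cdots w_{l(C_i)}$ plus nonnegative cross terms, so the product is at least $1+C_i$. For each odd-length cycle the factor $1-C_i$ is at most $1$, hence at most the corresponding product of pair factors $(1+ww')\geq 1$; and every pair factor coming from $L$ is also $\geq 1$. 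Multiplying these inequalities gives
$$\prod_{i=1}^{n/2}(1+x_{2i-1}x_{2i})\;\geq\;\prod_{i=1}^{k}\bigl(1+(-1)^{l(C_i)}C_i\bigr)\;=\;{\rm per}(I-A')\;\geq\;{\rm per}(I-A),$$
which is the claimed bound.

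I anticipate the main obstacle to be the parity bookkeeping for the odd-length cycles: their leftover weights must be absorbed by zero row sums or by tail positives elsewhere in the matrix, and this absorption is precisely what the hypothesis that $n$ is even makes possible. Once the pairing is chosen correctly, the remaining inequalities are elementary term-by-term comparisons.
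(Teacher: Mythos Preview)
Your proposal is correct and follows essentially the same route as the paper, which simply cites Lemma~\ref{Leilm2} and Lemma~\ref{lm:neqper}: reduce $A$ (preserving every row sum) to a matrix in $\tilde{\omega}_n^{0,1}$ via the diagonal shifts of Lemma~\ref{leilm1} and the within-row swaps of Lemma~\ref{Leilm2}(1), then bound the resulting cycle product; you have merely made the parity bookkeeping for the pairing explicit. One small correction: to ensure the swap actually annihilates an entry you should take $\epsilon\in\{a_{ki},\,-a_{kj}\}$ (choosing the sign that does not decrease the permanent) rather than $\epsilon=\pm\min(a_{ki},a_{kj})$, since the latter need not zero either entry when the ``wrong'' sign is forced.
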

\begin{proof}
It follows from Lemma~\ref{Leilm2} and Lemma~\ref{lm:neqper}.
\end{proof}
\begin{prop}\label{prop2}
Let $A,B$ be square matrices, then
$${\rm per}\begin{pmatrix}A & \\ & B\end{pmatrix}={\rm per}(A){\rm per}(B)$$
\end{prop}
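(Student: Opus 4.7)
The plan is to unpack the definition of the permanent applied to the block-diagonal matrix directly, with no deep machinery required. First I would fix notation: let $A$ be $m\times m$ with entries $a_{ij}$, let $B$ be $n\times n$ with entries $b_{ij}$, and write $M=\begin{pmatrix}A & 0\\ 0 & B\end{pmatrix}=[m_{ij}]$, an $(m+n)$-square matrix. The goal is then to evaluate $\mathrm{per}(M)=\sum_{\pi\in S_{m+n}}\prod_{i=1}^{m+n} m_{i,\pi(i)}$.

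The key observation is that $m_{ij}=0$ whenever exactly one of the indices $i,j$ lies in $\{1,\ldots,m\}$. Therefore a permutation $\pi\in S_{m+n}$ contributes a nonzero term only if it preserves the block partition, namely $\pi(\{1,\ldots,m\})=\{1,\ldots,m\}$ and $\pi(\{m+1,\ldots,m+n\})=\{m+1,\ldots,m+n\}$; every other $\pi$ forces at least one off-diagonal-block factor $m_{i,\pi(i)}=0$ into the product and is discarded from the sum.

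The main step is then a bijection between such block-preserving permutations and $S_m\times S_n$: send $\pi$ to $(\pi_1,\pi_2)$ where $\pi_1\in S_m$ is $\pi$ restricted to $\{1,\ldots,m\}$ and $\pi_2\in S_n$ is defined by $\pi_2(j)=\pi(m+j)-m$. Under this bijection each surviving product splits cleanly as $\prod_{i=1}^{m+n} m_{i,\pi(i)}=\bigl(\prod_{i=1}^{m} a_{i,\pi_1(i)}\bigr)\bigl(\prod_{j=1}^{n} b_{j,\pi_2(j)}\bigr)$. Reindexing the sum over $S_{m+n}$ as a double sum over $S_m\times S_n$ and applying distributivity yields $\mathrm{per}(M)=\bigl(\sum_{\pi_1\in S_m}\prod_{i=1}^m a_{i,\pi_1(i)}\bigr)\bigl(\sum_{\pi_2\in S_n}\prod_{j=1}^n b_{j,\pi_2(j)}\bigr)=\mathrm{per}(A)\,\mathrm{per}(B)$.

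There is essentially no obstacle here; the proposition is a direct consequence of the definition of the permanent together with the vanishing of the off-diagonal blocks. The only point to state carefully is the bijection between block-preserving permutations in $S_{m+n}$ and the product $S_m\times S_n$, which is routine.
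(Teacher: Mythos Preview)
Your proof is correct and is exactly the unpacking of the definition that the paper intends; the paper's own proof consists of the single sentence ``It follows from the definition of the permanent.'' Your argument simply spells out the routine details behind that sentence.
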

\begin{proof}
It follows from the definition of the permanent.
\end{proof}

\begin{prop}\label{prop3}
Let $x, y$ be non-negative numbers and the sum of $x$ and $y$ is fixed.
Then
\begin{equation*}
\max {\rm per}\begin{pmatrix} 1 & x \\ y & 1\end{pmatrix}=\max(1+xy)=1+\left(\frac{x+y}{2}\right)^2.
\end{equation*}
\end{prop}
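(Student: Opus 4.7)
The plan is to reduce the problem to a one-variable optimization via the standard AM--GM style inequality. First I would expand the permanent directly from its definition: for a $2\times 2$ matrix the permanent is the sum of the two ``diagonal'' products, so
\[
{\rm per}\begin{pmatrix} 1 & x \\ y & 1\end{pmatrix} = 1\cdot 1 + x\cdot y = 1+xy.
\]
This reduces the claim to showing that $xy$ is maximized, subject to $x+y$ being fixed, exactly when $x = y = (x+y)/2$.

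Next I would invoke the elementary inequality $(x-y)^2 \geq 0$, which rearranges to
\[
xy \leq \left(\frac{x+y}{2}\right)^2,
\]
with equality if and only if $x = y$. Since $x,y \geq 0$ and $x+y$ is prescribed, the values $x = y = (x+y)/2$ are admissible, so the bound is attained. Plugging back in gives
\[
{\rm per}\begin{pmatrix} 1 & x \\ y & 1\end{pmatrix} = 1+xy \leq 1 + \left(\frac{x+y}{2}\right)^2,
\]
with equality achieved at $x=y$, which establishes the proposition.

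There is no genuine obstacle here; the statement is essentially a one-line consequence of AM--GM applied after expanding the $2\times 2$ permanent. The only thing worth flagging is that nonnegativity of $x$ and $y$ is used implicitly to ensure $x = y = (x+y)/2 \geq 0$ is a feasible point at which the maximum is attained, so that the $\max$ is indeed realized and not merely a supremum.
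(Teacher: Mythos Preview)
Your proof is correct and follows essentially the same approach as the paper: expand the $2\times2$ permanent to $1+xy$ and then apply the arithmetic--geometric mean inequality $xy\le\left(\frac{x+y}{2}\right)^2$. The paper's proof is just the one-line invocation of AM--GM, so there is nothing further to compare.
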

\begin{proof}
It follows from the arithmetic-geometric  inequality that $xy\leq (\frac{x+y}{2})^2.$
\end{proof}

\begin{lem}\label{lmzhi}
For a sequence satisfying $0< z_n\leq z_{n-1}\leq \ldots \leq z_2\leq z_1 <1$ with $\sum_{i=1}^nz_i=\bar{s}$ fixed,  let $\epsilon=\min\{1-z_1,z_n\}$. Define  $$y_1=z_1+\epsilon, y_n=z_n-\epsilon$$ and $y_i=z_i$ for $i=1,2,\ldots,n.$ Then
\begin{equation}\label{mainr}
\sum_{k=1}^n\sum_{1\leq i_1<i_2<\ldots<i_k\leq n}z_{i_1}^2 z_{i_2}^2\cdots z_{i_k}^2 < \sum_{k=1}^{n}\sum_{1\leq i_1<i_2<\ldots<i_k\leq n}y_{i_1}^2 y_{i_2}^2\cdots y_{i_k}^2.
\end{equation}
\end{lem}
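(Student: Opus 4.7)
The plan is to rewrite the sums on both sides of \eqref{mainr} as values of a generating function, which collapses the problem to a two-variable inequality.

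Observe the elementary identity
\begin{equation*}
\prod_{i=1}^n \bigl(1+w_i^2\bigr) \;=\; 1 + \sum_{k=1}^{n}\sum_{1\le i_1<\cdots<i_k\le n} w_{i_1}^2\,w_{i_2}^2\cdots w_{i_k}^2,
\end{equation*}
valid for any real numbers $w_1,\dots,w_n$. Applying this to $w_i=z_i$ and $w_i=y_i$ respectively, the inequality \eqref{mainr} is equivalent to
\begin{equation*}
\prod_{i=1}^n (1+z_i^2) \;<\; \prod_{i=1}^n (1+y_i^2).
\end{equation*}
Since $y_i=z_i$ for every $i\notin\{1,n\}$, all but two factors cancel, and it suffices to prove
\begin{equation*}
(1+z_1^2)(1+z_n^2) \;<\; \bigl(1+(z_1+\epsilon)^2\bigr)\bigl(1+(z_n-\epsilon)^2\bigr).
\end{equation*}

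To establish this, I set $a=z_1+t$, $b=z_n-t$ and consider $f(t)=(1+a^2)(1+b^2)$ as a function of $t\in[0,\epsilon]$. A direct differentiation, using $a+b=z_1+z_n$ fixed, gives
\begin{equation*}
f'(t) \;=\; 2a(1+b^2) - 2b(1+a^2) \;=\; 2(a-b)(1-ab).
\end{equation*}
Now $a-b=(z_1-z_n)+2t\ge 2t>0$ for $t>0$. For the factor $1-ab$: since $t\le 1-z_1$ we have $a=z_1+t\le 1$, and since $t\le z_n$ we have $b=z_n-t\ge 0$; combining with $z_n<1$ yields $ab\le b\le z_n<1$, so $1-ab>0$. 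Hence $f'(t)>0$ on $(0,\epsilon]$, so $f(\epsilon)>f(0)$, which is exactly what we need.

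The one point that deserves care, and is really the only non-routine step, is verifying $ab<1$ strictly: this is where both hypotheses $z_1<1$ and $z_n>0$ (and the specific choice $\epsilon=\min\{1-z_1,z_n\}$) enter, ensuring that the endpoint degeneracies $a=1$ with $b=1$ cannot occur. Once that is in hand, the monotonicity argument and the generating-function reformulation make the whole proof essentially a one-line calculation.
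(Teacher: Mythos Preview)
Your proof is correct. Both your argument and the paper's reduce the inequality to the two-variable statement $(1+z_1^2)(1+z_n^2)<(1+y_1^2)(1+y_n^2)$: you do this transparently via the product identity $\prod_i(1+w_i^2)=1+\sum_k e_k(w_1^2,\dots,w_n^2)$, while the paper writes out the same factorization by hand as
\[
\bigl(y_1^2+y_n^2+y_1^2y_n^2-z_1^2-z_n^2-z_1^2z_n^2\bigr)\Bigl(1+\sum_{k}\sum_{2\le i_1<\dots<i_k\le n-1}\prod z_{i_m}^2\Bigr),
\]
which is precisely $\bigl[(1+y_1^2)(1+y_n^2)-(1+z_1^2)(1+z_n^2)\bigr]\prod_{i=2}^{n-1}(1+z_i^2)$. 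The genuine difference is in the two-variable step: the paper splits into the two cases $\epsilon=1-z_1$ and $\epsilon=z_n$ and expands each algebraically, whereas you prove the stronger monotonicity statement via $f'(t)=2(a-b)(1-ab)>0$ on $(0,\epsilon]$. Your route is cleaner and avoids the case split; the paper's route is more elementary in that it needs no calculus and gives explicit factorizations at the endpoints.
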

\begin{proof} First we show \eqref{mainr} holds when $n=2.$
In this case $y_1=z_1+\epsilon\leq 1$ and $y_2=z_2-\epsilon \geq 0.$ We need to prove that
$$y_1^2+y_2^2+y_1^2y_2^2 -(z_1^2+z_2^2+z_1^2z_2^2)>0,$$
which is equivalent to
\begin{equation}\label{main}
(z_1+\epsilon)^2+(z_2-\epsilon)^2+(z_1+\epsilon)^2(z_2-\epsilon)^2-(z_1^2+z_2^2+z_1^2z_2^2)>0.
\end{equation}
Since $\epsilon = \min \{ 1-z_1, z_2\},$ there are two possibilities: either $\epsilon=1-z_1$ or $\epsilon=z_2$. We discuss the two cases separately as follows.
\begin{enumerate}
\item If $\epsilon=1-z_1,$ then $y_1=1$ and $y_2=z_1+z_2-1 \geq 0.$ Thus we have
\begin{eqnarray}\nonumber
&& y_1^2+y_2^2+y_1^2y_2^2-(z_1^2+z_2^2+z_1^2 z_2^2) \\
\nonumber &=& 1+(z_1+z_2-1)^2+(z_1+z_2-1)^2-z_1^2-z_2^2-z_1^2z_2^2\\
\nonumber &=& 3+z_1^2+z_2^2+4z_1z_2-4z_1-4z_2-z_1^2z_2^2\\
\nonumber &=& (z_1^2+z_2^2-z_1^2z_2^2-1)+4(z_1z_2-z_1-z_2+1)\\
\nonumber &=&-(1-z_1^2)(1-z_2^2)+4(1-z_1)(1-z_2)\\
 \label{eqc1}&=&(1-z_1)(1-z_2)[4-(1+z_1)(1+z_2)]
\end{eqnarray}
By the assumption that $0<z_2\leq z_1 <1,$ all factors in \eqref{eqc1} are positive, so \eqref{main} holds.

\item  If $\epsilon=z_2,$ then $y_2=0$ and $y_1=z_1+z_2\leq 1.$ Thus we have
\begin{eqnarray}\nonumber
&& y_1^2+y_2^2+y_1^2\cdot y_2^2-(z_1^2+z_2^2+z_1^2 z_2^2) \\
\nonumber &=& (z_1+z_2)^2-z_1^2-z_2^2-z_1^2z_2^2\\
\nonumber &=& z_1^2z_2^2> 0
\end{eqnarray}
 So \eqref{main} holds.
\end{enumerate}

To show \eqref{mainr} in general cases, notice that
\begin{align}
&\sum_{k=1}^{n}\sum_{1\leq i_1<i_2<\ldots<i_k\leq n}y_{i_1}^2 y_{i_2}^2\cdots y_{i_k}^2 -\sum_{k=1}^n\sum_{1\leq i_1<i_2<\ldots<i_k\leq n}z_{i_1}^2 z_{i_2}^2\cdots z_{i_k}^2 \nonumber \\
=&\left(y_1^2+y_n^2+y_1^2y_n^2-z_1^2-z_n^2-z_1^2z_n^2\right)\left(1+\sum_{k=1}^{n-2}\sum_{2\leq i_1<i_2<\ldots<i_k\leq n-1}\prod_{m=1}^k z_{i_m}^2\right).\label{eqc2}
%\label{eqc2} &=& [(x_1+\epsilon)^2+(x_n-\epsilon)^2+(x_1+\epsilon)^2(x_n-\epsilon)-(x_1^2+x_n^2+x_1^2x_n^2)](1+\sum_{k=1}^{n-2}\sum_{2\leq i_1<i_2<\ldots<i_k\leq n-1}\prod_{m=1}^kx_{i_m}^2)
\end{align}
The second factor on the righthand side of equation~\eqref{eqc2} is obviously positive.
Due to \eqref{main}, we know that
\begin{align*}
& y_1^2+y_n^2+y_1^2y_n^2-z_1^2-z_n^2-z_1^2z_n^2 \\
=&(z_1+\epsilon)^2+(z_n-\epsilon)^2+(z_1+\epsilon)^2(z_n-\epsilon)^2-\left(z_1^2+z_n^2+z_1^2z_n^2\right)>0.
\end{align*}
Therefore \eqref{eqc2} is strictly greater than zero and \eqref{mainr} is proved.
\end{proof}

\begin{lem}\label{Leilm5}
Let $0\leq z_n\leq z_{n-1}\leq \ldots \leq z_2\leq z_1 \leq 1$ satisfying $\sum_{i=1}^nz_i=\bar{s}.$ Then
\begin{equation} \label{Leieq3}
\max \left(1+\sum_{k=1}^n\sum_{1\leq i_1<i_2<\ldots<i_k\leq n}z_{i_1}^2 z_{i_2}^2\cdots z_{i_k}^2\right)=2^{\lfloor \bar{s}\rfloor}\left[1+(\bar{s}-\lfloor \bar{s}\rfloor)^2\right],
\end{equation}
where $\lfloor \bar{s}\rfloor$ denotes the greatest integer less than or equal to $\bar{s}$.
\end{lem}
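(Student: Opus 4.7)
The central observation is the factorization
$$1+\sum_{k=1}^{n}\sum_{1\leq i_1<\cdots<i_k\leq n}z_{i_1}^2 z_{i_2}^2\cdots z_{i_k}^2 \;=\; \prod_{i=1}^{n}(1+z_i^2),$$
since expanding the right-hand side yields precisely the sum of all products of distinct $z_{i_j}^2$, i.e.\ every elementary symmetric polynomial in $z_1^2,\ldots,z_n^2$, with the constant term contributing the leading $1$. Thus the problem reduces to maximizing $\prod_{i=1}^n(1+z_i^2)$ over the compact set $\{(z_1,\ldots,z_n):0\leq z_i\leq 1,\ \sum z_i=\bar{s}\}$.

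I would next argue that at any maximizer, at most one coordinate can lie in the open interval $(0,1)$. Indeed, if two coordinates $z_i,z_j$ both satisfied $0<z_i,z_j<1$, then the $n=2$ case of Lemma~\ref{lmzhi} applied to the pair $(z_i,z_j)$, holding every other coordinate fixed, produces replacements $y_i,y_j$ with $y_i+y_j=z_i+z_j$ and
$$y_i^2+y_j^2+y_i^2 y_j^2 \;>\; z_i^2+z_j^2+z_i^2 z_j^2,$$
equivalently $(1+y_i^2)(1+y_j^2)>(1+z_i^2)(1+z_j^2)$. Multiplying both sides by the (positive) product of the unchanged factors strictly increases $\prod_k(1+z_k^2)$, contradicting maximality. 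Existence of a maximizer is guaranteed by compactness and continuity.

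After reordering, the maximizer therefore consists of $m$ entries equal to $1$, at most one entry $r\in[0,1)$, and the remaining entries equal to $0$. The constraint $\sum z_i=\bar{s}$ forces $m+r=\bar{s}$, and since $0\leq r<1$ this uniquely determines $m=\lfloor\bar{s}\rfloor$ and $r=\bar{s}-\lfloor\bar{s}\rfloor$. Evaluating the product at this configuration gives
$$\prod_{i=1}^n(1+z_i^2)\;=\;2^{m}\cdot(1+r^2)\cdot 1^{n-m-1}\;=\;2^{\lfloor\bar{s}\rfloor}\bigl[1+(\bar{s}-\lfloor\bar{s}\rfloor)^2\bigr],$$
which is the claimed maximum; the case $\bar{s}\in\mathbb{Z}$ is automatically covered, with $r=0$ and the formula correctly collapsing to $2^{\bar{s}}$. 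The main conceptual hurdle is spotting the product factorization at the outset — once that is in hand, the entire optimization collapses onto the two-variable Lemma~\ref{lmzhi} and a one-line counting argument, with no further calculation required.
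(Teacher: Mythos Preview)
Your proof is correct and follows essentially the same route as the paper: both rely on Lemma~\ref{lmzhi} to show that any configuration with two coordinates strictly inside $(0,1)$ can be strictly improved, forcing the maximizer to have all coordinates at $0$ or $1$ except possibly one fractional entry. Your explicit use of the factorization $\prod_i(1+z_i^2)$ is a clean touch that the paper leaves implicit --- the paper works with the elementary-symmetric-sum form and iterates the full Lemma~\ref{lmzhi} to \emph{reach} the maximizer, whereas you invoke only the $n=2$ case together with compactness and contradiction --- but the underlying idea is identical.
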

\begin{proof}
First if we let $y_1=y_2=\ldots=y_{\lfloor \bar{s}\rfloor}=1, y_{\lfloor \bar{s}\rfloor+1}=\bar{s}-\lfloor \bar{s}\rfloor$, and $y_i=0$ for $\lfloor \bar{s}\rfloor+1<i\leq n$, then
$$1+\sum_{k=1}^n\sum_{1\leq i_1<i_2<\ldots<i_k\leq n}y_{i_1}^2 y_{i_2}^2\cdots y_{i_k}^2 =2^{\lfloor \bar{s}\rfloor}\left[1+(\bar{s}-\lfloor \bar{s}\rfloor)^2\right].$$
Next we need to show it is the maximum. Indeed, suppose there exists $0< z_n\leq z_{n-1}\leq \ldots \leq z_2\leq z_1 \leq 1$ with $\sum_{i=1}^mz_i=\bar{s}$.  According to Lemma~\ref{lmzhi}, let $\epsilon=\min\{1-z_r, z_m\}$ and we can add $\epsilon$ to some $z_r$ where $r$ is the smallest index such that $z_r<1$. We then subtract $\epsilon$ from $z_n$. This makes either $z_r$ be $1$ or $z_n$ be $0$, and the sum in~\eqref{Leieq3} greater than before without changing the sum of $z_i$'s. Repeat this process as many times as possible until we cannot do it any more. Eventually the sequence $\{z_i\}$ will be changed into $\{y_i\}$ and then the corresponding value attained is maximum.
\end{proof}
Now, we are ready to prove Theorem~\ref{main1} which is a direct consequence of the following theorem.
\begin{thm}\label{thm:premain}
For $A$ an $n$-square row substochastic matrix with $\sigma(A)=s$. Denote by $e$ the greatest even integer less than or equal to $s$. If $n$ and $s$ satisfying either
\begin{enumerate}
\item $n$ is even, or
\item $n$ is odd and $s\leq n-1,$
\end{enumerate}
then
\begin{equation*}
\max \{{\rm per}(I-A)| A\in\tilde{\omega}_n^s\}= 2^{e/2}\left[1+\left(\frac{s-e}{2}\right)^2\right].
\end{equation*}
\end{thm}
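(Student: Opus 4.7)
The plan is to reduce an optimizing $A\in\tilde{\omega}_n^s$ to a canonical direct sum of $2\times 2$ blocks and then evaluate the resulting constrained maximum via Lemma~\ref{Leilm5}. Suppose $A$ attains the maximum. By Corollary~\ref{Leico1} its diagonal is zero, and by Lemma~\ref{Leilm2}(2) I may assume that each row of $A$ contains at most one positive entry. If the total number of positive entries is odd, Corollary~\ref{cor:maxper}(2) produces a matrix $\tilde A$ with $\sigma(\tilde A)=\sigma(A)$ and strictly larger ${\rm per}(I-\tilde A)$ by splitting the orphan entry into two halves on a fresh row pair, contradicting maximality---provided the hypothesis on $(n,s)$ supplies the necessary spare rows. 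After this reduction, Corollary~\ref{cor:maxper}(1) permutes $A$ to the block form $\bigoplus_{i=1}^{t}\begin{pmatrix}0&x_{2i-1}\\x_{2i}&0\end{pmatrix}\oplus\mathbf{0}_{n-2t}$.

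By Proposition~\ref{prop2} one has ${\rm per}(I-A)=\prod_{i=1}^{t}(1+x_{2i-1}x_{2i})$, and Proposition~\ref{prop3} allows me to hold each pair sum $x_{2i-1}+x_{2i}=2z_i$ fixed while replacing both entries by $z_i$, weakly increasing every factor (and preserving row substochasticity). The problem then reduces to maximizing $\prod_{i=1}^{t}(1+z_i^2)$ subject to $0\le z_i\le 1$ and $\sum_{i=1}^{t}z_i=s/2$. Expanding the product into elementary symmetric functions in the $z_i^2$ and invoking Lemma~\ref{Leilm5} with $\bar s=s/2$ delivers the upper bound $2^{\lfloor s/2\rfloor}\bigl[1+(s/2-\lfloor s/2\rfloor)^2\bigr]$, which coincides with $2^{e/2}\bigl[1+((s-e)/2)^2\bigr]$ via the identifications $\lfloor s/2\rfloor = e/2$ and $s/2-\lfloor s/2\rfloor=(s-e)/2$. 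For the matching lower bound I exhibit the explicit extremizer composed of $e/2$ copies of $\begin{pmatrix}0&1\\1&0\end{pmatrix}$, one copy of $\begin{pmatrix}0&(s-e)/2\\(s-e)/2&0\end{pmatrix}$ (absent when $s=e$), and a zero padding, and verify the claimed value by direct computation.

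The main obstacle is the parity/size bookkeeping that underwrites the two reductions above. The extremal construction uses $e$ or $e+2$ rows, and Corollary~\ref{cor:maxper}(2)'s odd-entry reduction requires at least one unpaired row pair available in $A$. A careful case check against the hypotheses shows that this slack exists exactly when $n$ is even (where $e\le s\le n$ together with $e$ and $n$ both even forces $e+2\le n$, with the padless case $s=e=n$ handled separately) or when $n$ is odd and $s\le n-1$ (where the even integer $e$ satisfies $e\le n-1$, leaving at least one free row). When both hypotheses fail, the orphan cannot be accommodated within $n$ rows, the extremal structure genuinely changes, and that regime is deferred to Section~\ref{sec:ques}.
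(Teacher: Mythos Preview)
Your proposal follows essentially the same route as the paper: reduce via Lemma~\ref{Leilm2} to a zero-diagonal matrix with at most one positive entry per row, pass to block-diagonal form via Corollary~\ref{cor:maxper}, symmetrize each $2\times2$ block by Proposition~\ref{prop3}, and then apply Lemma~\ref{Leilm5} with $\bar s=s/2$. The identifications $\lfloor s/2\rfloor=e/2$ and $s/2-\lfloor s/2\rfloor=(s-e)/2$ are correct, and your explicit extremizer matches the paper's.

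There is one loose thread in your bookkeeping paragraph. Your ``careful case check'' verifies that the \emph{extremizer} fits into $n$ rows (it occupies $e$ or $e+2\le n$ rows), but that is not the condition governing whether Corollary~\ref{cor:maxper}(2) applies to a given maximizer $A$: the corollary requires $2t+1\le n-1$, where $2t+1$ is the number of positive entries of $A$ after the Lemma~\ref{Leilm2} reduction, and $2t+1$ is not a function of $e$ or $s$. When $n$ is odd and $s\le n-1$, nothing you wrote excludes $2t+1=n$ (for instance, one positive entry $s/n<1$ in each of the $n$ rows); in that configuration there is no spare row pair and your contradiction via Corollary~\ref{cor:maxper}(2) is unavailable. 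The patch is to bypass the contradiction and bound directly: Lemma~\ref{lm:neqper}(2) already yields
\[
{\rm per}(I-A)\;<\;\prod_{i=1}^{(n-1)/2}(1+x_{2i-1}x_{2i})\Bigl(1+\tfrac{x_n^2}{4}\Bigr)\;\le\;\prod_{i=1}^{(n+1)/2}(1+z_i^2),
\]
with each $z_i\in[0,1]$ and $\sum z_i=s/2$, and Lemma~\ref{Leilm5} applies because $(n+1)/2\ge e/2+1$. The paper's own proof is equally terse at this juncture, so this is a refinement of presentation rather than a departure in method.
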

\begin{proof}
First we consider the case when $n$ is even.  According to Lemma~\ref{Leilm2} and~\ref{cor:maxper}, to maximize the value of ${\rm per}(I-A)$, $A$ must be in the form~\eqref{eq2}. Due to Proposition~\ref{prop2},
$${\rm per}(I-A)=\prod_{i=1}^{n/2}{\rm per}(I_2-X_i)$$
where $X_i=\begin{pmatrix}0& x_{2i-1}\\ x_{2i} &0 \end{pmatrix}$ for $i=1,2,\ldots, n/2.$ By direct calculation and the arithmetic-geometric inequality,
$${\rm per}(I_2-X_i)=1+x_{2i-1}x_{2_i}\leq 1+\left(\frac{x_{2i-1}+x_{2i}}{2}\right)^2.$$
In order to maximize per$(I_2-X_i),$ we should let $x_{2i-1}=x_{2i}$ due to Proposition~\ref{prop3}. Hence
\begin{eqnarray} \nonumber
{\rm per}(I-A)&=&\prod_{i=1}^{n/2}(1+x_{2i}^2)=\prod_{i=1}^{n/2}(1+y_{i}^2)\\
\label{eq4} &=&1+\sum_{k=1}^{n/2}\sum_{1\leq i_1<i_2<\ldots<i_k\leq n/2}y_{i_1}^2 y_{i_2}^2\cdots y_{i_k}^2,
\end{eqnarray}
where $y_{i}=x_{2i}$ for $i=1,2,\ldots, n/2$.
Apply Lemma~\ref{Leilm5} to~\eqref{eq4}, we get
$$\max \{{\rm per}(I-A)| A\in\tilde{\omega}_n^s\}=2^{e/2}\left[1+\left(\frac{s-e}{2}\right)^2\right].$$
In this case,
$$y_{1}=y_{2}=\ldots=y_{e/2}=1, y_{\frac{e}{2}+1}=\frac{s-e}{2},$$
and $$y_{\frac{e}{2}+2}=\cdots=y_{n/2}=0.$$
That means we can actually choose a doubly substochastic matrix $\tilde{A}$ with $\sigma(\tilde{A})=s$ as follows
$$\tilde{A}=M_2\oplus M_2 \oplus \ldots \oplus M_2 \oplus S_2 \oplus \mathbf{0}_{n-e-2},$$
where $M_2=\begin{pmatrix}0 & 1 \\ 1 & 0\end{pmatrix}$ with $e/2$ copies in total, $S_2=\begin{pmatrix}0 & \frac{s-e}{2} \\  \frac{s-e}{2} & 0 \end{pmatrix},$ and $\mathbf{0}_{n-e-2}$ is the zero matrix with order $n-e-2$. If $e=n$ then $\mathbf{0}_{n-e-2}$ won't show up.
It is easy to see that such an $\tilde{A}$ maximize the value ${\rm per}(I-A)$, where $A$ can be any $n$-square row substochastic matrix satisfying $\sigma(A)=s$.

In the case when $n$ is odd and $s\leq n-1$, by Corollary~\ref{cor:maxper} we can always construct a row substochastic matrix $B$ with row sum equals to $s$, such that $B$ contains even number of positive elements with at most one positive element in each row. Actually from  Corollary~\ref{cor:maxper} we can see that $B$ is also doubly substochastic. Using the similar method in proving the above case when $n$ is even, we get the result of the theorem. Notice that here $B$ takes the same form as $\tilde{A}$ except that $n$ is odd.
\end{proof}

\begin{remark}
According to the proof of Theorem~\ref{thm:premain}, both $\tilde{A}$ and $B$ are doubly substochastic, which maximize the value of ${\rm per}(I-A)$. Therefore Theorem~\ref{main1} follows from Theorem~\ref{thm:premain} naturally.
\end{remark}
\noindent {\bf Example}
Denote by $\mathbf{0}_n$ the $n\times n$ zero matrix and $M_2=\begin{pmatrix} 0 & 1 \\ 1 & 0\end{pmatrix}.$ If $A\in \omega_{9}$ with $\sigma(A)=5$,
then one can construct $$\tilde{A}=M_2\oplus M_2\oplus\frac{1}{2}M_2 \oplus \mathbf{0}_3,$$ which maximizes per$(I-A)$ for all $A$ doubly substochastic with fixed total sum $5$. It is easy to calculate that
$${\rm per}(I-\tilde{A})=2\cdot 2 \cdot (1+\frac{1}{4})=5.$$

\section{Further questions}\label{sec:ques}
The conditions in Theorem~\ref{main1} require that either $n$ is even or $\sigma(A)\leq n-1$, which leaves the case that $n$ is odd and $\sigma(A)>n-1$ uncovered. The requirement is due to the way provided in Corollary~\ref{main} to construct the doubly substochastic matrix which maximizes ${\rm per}(I-A).$
It is worth to point out that for $A\in\tilde{\omega}_n^s$, the maximum of ${\rm per}(I-A)$ can be easily obtained from Lemma~\ref{Leilm2}, Lemma~\ref{lm:perprof} and Lemma~\ref{lm:neqper}. We state the result as the following theorem.
\begin{thm}
For $n$ odd and $n-1<\sigma(A)\leq n$, let $\sigma(A)=s$ and then we have
\begin{equation*}
\max\{{\rm per}(I-A)|A\in\tilde{\omega}_n^s, n-1<s\leq n\}=2^{\frac{n-1}{2}}.
\end{equation*}
To get the maximum value, we can simply take
$$\tilde{A}=M_2\oplus M_2 \oplus \ldots \oplus M_2 \oplus M_3,$$
where $M_2=\begin{pmatrix}0 & 1 \\ 1 & 0\end{pmatrix}$ with $(n-3)/2$ copies in total, and
$M_3=\begin{pmatrix}0 & 1 & 0 \\ 1& 0 & 0 \\ 0 & s-(n-1) & 0  \end{pmatrix}$.
\end{thm}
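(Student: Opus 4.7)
The plan is to exploit the reduction to $\tilde{\omega}_n^{0,1}$ already established in the paper. By Corollary~\ref{Leico1} and Lemma~\ref{Leilm2}(2), any $A\in\tilde{\omega}_n^s$ attaining the maximum may be replaced, without changing $\sigma(A)$ or ${\rm per}(I-A)$, by a matrix with zero diagonal and at most one positive entry per row; I will therefore assume $A\in\tilde{\omega}_n^{0,1}$ throughout.

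The key new observation in the regime $n-1<s\leq n$ is the following counting fact. Every positive entry of a row substochastic matrix is at most $1$, so if $A$ has $N$ positive entries then $s\leq N$, forcing $N\geq\lceil s\rceil=n$; combined with $N\leq n$ this gives $N=n$. Hence \emph{every} row of $A$ contains exactly one positive entry, and the associated directed graph $G_A$ has every vertex of out-degree exactly one. Each weakly connected component of such a functional graph contains a unique directed cycle with (possibly) trees feeding into it, and Lemma~\ref{lm:perprof} applies.

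Writing ${\rm per}(I-A)=\prod_{i=1}^{k}(1+(-1)^{l(C_i)}C_i)$ over the directed cycles $C_1,\ldots,C_k$ of $G_A$, I bound each factor. When $l(C_i)$ is even, the factor is at most $1+1=2$ since every edge weight lies in $(0,1]$; when $l(C_i)$ is odd, the factor is $1-C_i\leq 1$. Hence ${\rm per}(I-A)\leq 2^{m}$, where $m$ is the number of even-length cycles in $G_A$. The cycles of $G_A$ are vertex-disjoint, so $\sum_i l(C_i)\leq n$; together with $l(C_i)\geq 2$ for even cycles this gives $2m\leq n$, and because $n$ is odd this forces $m\leq (n-1)/2$. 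Therefore ${\rm per}(I-A)\leq 2^{(n-1)/2}$.

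For the matching lower bound, a direct calculation with the stated $\tilde{A}=M_2\oplus\cdots\oplus M_2\oplus M_3$ suffices: each of the $(n-3)/2$ blocks $M_2$ gives a $2$-cycle of weight-product $1$, while inside $M_3$ the pair $\{1,2\}$ carries a $2$-cycle of weight-product $1$ (the third vertex is a tree vertex whose outgoing edge of weight $s-(n-1)$ is not on any cycle and so contributes nothing to the permanent). This produces $(n-1)/2$ factors of $2$ and attains $2^{(n-1)/2}$. The only delicate step is the parity-and-counting observation that $2m\leq n$ with $n$ odd forces $m\leq (n-1)/2$; everything else is assembly of lemmas already in place.
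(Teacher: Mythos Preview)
Your argument is correct and matches the paper's outline: the paper merely states that the result follows from Lemma~\ref{Leilm2}, Lemma~\ref{lm:perprof}, and Lemma~\ref{lm:neqper} without giving details, and you supply precisely those details, including the key pigeonhole step that $\sigma(A)>n-1$ forces every row of the reduced matrix to carry a positive entry, and the parity count $2m\le n$ with $n$ odd. Your direct bound on each cycle factor (at most $2$ for even length, at most $1$ for odd) slightly streamlines the appeal to Lemma~\ref{lm:neqper} as stated, but the substance is the same.
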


Notice that the above $\tilde{A}$ is a row substochastic matrix but not a doubly substochastic matrix since the second column sum of $M_3$ is strictly greater than one. Thus the question that finding the maximum value of ${\rm per}(I-A)$ for $A\in\omega_n^s$ where $n$ is odd and $n-1<s\leq n$ becomes particularly difficult. In this section, we explore the special case when $n=s=3$, which is for all $A\in\Omega_3$. Then we give some conjectures based on this result.
\begin{lem}
$$\max_{A\in\Omega_3}\{{\rm per}(I-A)\}=\frac{3}{2}.$$
\end{lem}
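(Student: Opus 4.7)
The plan is to exploit Corollary~\ref{Leico1} to cut down the search space, and then carry out a one-parameter optimization.

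First I would invoke Corollary~\ref{Leico1}, applied to doubly stochastic matrices with $s=3$, to conclude that any maximizer $A\in\Omega_3$ has zero main diagonal. So it suffices to maximize ${\rm per}(I-A)$ over the subset of $\Omega_3$ consisting of matrices with $a_{11}=a_{22}=a_{33}=0$.

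Next I would parametrize this subset. The only two $3\times 3$ permutation matrices with zero diagonal are the two $3$-cycles
\[
P_1=\begin{pmatrix} 0 & 1 & 0 \\ 0 & 0 & 1 \\ 1 & 0 & 0 \end{pmatrix},\qquad P_2=\begin{pmatrix} 0 & 0 & 1 \\ 1 & 0 & 0 \\ 0 & 1 & 0 \end{pmatrix},
\]
so by Birkhoff's theorem every zero-diagonal matrix in $\Omega_3$ is a convex combination $A=tP_1+(1-t)P_2$ for some $t\in[0,1]$. (One can also see this directly from the six linear equations imposed by zero diagonal together with row and column sums one: they force a one-parameter family $a_{12}=a_{23}=a_{31}=t$ and $a_{13}=a_{21}=a_{32}=1-t$.)

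Then I would compute ${\rm per}(I-A)$ as an explicit function of $t$. Expanding the $3\times 3$ permanent directly, the three identity-type terms contribute $1$, the two cycle-type terms contribute $-t^3-(1-t)^3$, and the three transposition-type terms each contribute $t(1-t)$. Using the identity $t^3+(1-t)^3=1-3t(1-t)$, this simplifies to ${\rm per}(I-A)=6t(1-t)$.

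Finally, maximizing the quadratic $6t(1-t)$ on $[0,1]$ yields the maximum value $3/2$, attained at $t=1/2$, i.e.\ by the matrix $\tfrac{1}{2}(P_1+P_2)$. There is no genuine obstacle here; the only subtle point is justifying that the maximizer must have zero diagonal, which is exactly what Corollary~\ref{Leico1} supplies.
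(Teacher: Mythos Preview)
Your proof is correct and follows essentially the same route as the paper: invoke Corollary~\ref{Leico1} to reduce to zero-diagonal matrices in $\Omega_3$, observe that these form a one-parameter family (you phrase it via Birkhoff and the two $3$-cycles, the paper writes the same family directly), compute ${\rm per}(I-A)=6t(1-t)$, and optimize to get $3/2$ at $t=1/2$. The only cosmetic difference is your appeal to Birkhoff's theorem for the parametrization, which is equivalent to the paper's direct derivation from the row and column constraints.
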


\begin{proof} Suppose $A_0\in \Omega_3$ such that per$(I-A_0)$ is the maximum. Due to Corollary~\ref{Leico1}, $A_0$ must have zero diagonal. Since $A_0$ is also doubly stochastic, we can assume that $A_0$ has the following form
$$A_0=\begin{pmatrix}0 & x & 1-x \\1-x & 0 & x \\  x & 1-x & 0 \end{pmatrix}.$$
By direct computation we have
\begin{align*}
{\rm per}(I-A_0)&={\rm per}\begin{pmatrix}1 & -x & -1+x \\-1+x & 1 & -x \\  -x & -1+x & 1 \end{pmatrix}\\
              &=6x(1-x),
\end{align*}
which takes the maximum value $\frac{3}{2}$ when $x=\frac{1}{2}$.
That is equivalent to say
$$A_0=\begin{pmatrix}0& \frac{1}{2} & \frac{1}{2} \\ \frac{1}{2} & 0 & \frac{1}{2} \\ \frac{1}{2} & \frac{1}{2} & 0 \end{pmatrix},$$
and
$$\max_{A\in\Omega_3}\{{\rm per}(I-A)\}={\rm per}(I-A_0)=\frac{3}{2}.$$
\end{proof}
We then make the following conjecture.
\begin{conj}
Let $n$ be a positive odd integer. Then
$$\max\{{\rm per}(I-A)|A\in \Omega_n\}=2^{\frac{n-1}{2}}\cdot 3.$$
The maximum can be obtained by letting
$$A=M_2\oplus M_2 \oplus \ldots \oplus M_2 \oplus M_3,$$
where $M_2=\begin{pmatrix}0 & 1\\ 1& 0\end{pmatrix}$ with $\frac{n-1}{2}$ copies, and $M_3=\begin{pmatrix}0 & \frac{1}{2} & \frac{1}{2} \\ \frac{1}{2} & 0 & \frac{1}{2} \\ \frac{1}{2} & \frac{1}{2} & 0 \end{pmatrix}.$
\end{conj}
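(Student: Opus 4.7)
The plan is induction on odd $n$, using the lemma just proved as the base case $n=3$. So assume the conjecture holds for all odd integers up to $n-2$ (with $n\ge 5$ odd), and attempt to prove it for $n$. The outline mirrors the structure used for row substochastic matrices in Section~\ref{sec:maxper}, but every reduction must preserve the column-stochastic constraint, which is the new difficulty.

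First I would invoke Corollary~\ref{Leico1} to restrict attention to maximizers $A\in\Omega_n$ with zero main diagonal. The key intermediate claim would then be a \emph{block reduction}: the maximum of ${\rm per}(I-A)$ over zero-diagonal matrices in $\Omega_n$ is attained at some $A$ that, after a simultaneous row/column permutation, has the form $M_2\oplus A'$ with $A'\in\Omega_{n-2}$. Granted this claim, Proposition~\ref{prop2} gives ${\rm per}(I-A)=2\cdot{\rm per}(I-A')$, and the inductive hypothesis finishes the proof at the claimed value.

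The mechanism for the block reduction would be a doubly-stochastic analogue of the perturbation used in Lemma~\ref{leilm1}. Unlike the row-substochastic case, I cannot simply shift mass between two entries in one row; to stay in $\Omega_n$ I must perturb a $2\times 2$ rectangular pattern $(i,j),(i,k),(l,j),(l,k)$ simultaneously, replacing it by a convex combination $a_{ij}\mapsto a_{ij}+\varepsilon$, $a_{lk}\mapsto a_{lk}+\varepsilon$, $a_{ik}\mapsto a_{ik}-\varepsilon$, $a_{lj}\mapsto a_{lj}-\varepsilon$ (with the diagonal untouched so that Corollary~\ref{Leico1} is not violated). Expanding ${\rm per}(I-\tilde A)-{\rm per}(I-A)$ via Ryser's formula (Theorem~\ref{Ryser}) and grouping terms by the number of columns replaced, I would try to show this difference has a controlled sign, forcing mass to concentrate on a direct sum of transposition-blocks $M_2$ plus (by parity) exactly one odd block. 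A separate optimization within that odd block should single out the $3\times 3$ piece $M_3$ of the conjectured form, and a final comparison would be needed to rule out a single large odd block in favor of one $M_3$ plus many $M_2$'s.

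The main obstacle is the sign analysis in this four-point perturbation. In Lemma~\ref{leilm1}, only one row of $P=I-A$ changed, so the Ryser-expansion differences collapsed into a product with a single negative factor and a clean sign $(-1)^{t}$. Here both rows $i$ and $l$ and both columns $j$ and $k$ are perturbed, producing cross-terms in $S(P_m(\cdots))$ of order up to $\varepsilon^2$ and sign patterns that depend on whether $\{i,l\}$ and $\{j,k\}$ intersect the column-deletion index set. I expect the leading-order analysis to work along the lines of Proposition~\ref{Pro1}, but I anticipate having to handle the second-order $\varepsilon^2$ correction explicitly and to argue that it still has the right sign under the zero-diagonal constraint. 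A secondary obstacle is proving that the odd residual block cannot profitably exceed size $3$; this likely reduces to a direct computation using Birkhoff's theorem on derangements of $S_{2k+1}$ and the observation that every zero-diagonal doubly stochastic matrix is a convex combination of derangement permutations, so that ${\rm per}(I-A)$ becomes a polynomial on a simplex that can be maximized by a Lagrange-multiplier or cycle-index computation.
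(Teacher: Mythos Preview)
This statement is presented in the paper as a \emph{conjecture}, not a theorem; the paper offers no proof and explicitly describes this case ($n$ odd, $A\in\Omega_n$) as ``remaining a mystery.'' There is therefore no paper proof to compare against.

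Your outline is a plausible strategy, but the gaps you yourself flag are genuine and, as far as the paper is concerned, open. The crucial step is the block reduction---showing that some maximizer over $\Omega_n$ splits off an $M_2$ summand. The four-entry rectangular perturbation you propose is the natural doubly-stochastic analogue of Lemma~\ref{leilm1}, but its Ryser-expansion analysis does not collapse the way the single-row case does: two rows of $P=I-A$ change simultaneously, so the difference $S(N_t)-S(M_t)$ involves products in which the sign-controlling factors from Proposition~\ref{Pro1} interact across both perturbed rows, and there is no evident reason the resulting $C_t$ carry a uniform sign. Nothing in the paper (or in Malek's earlier work) supplies this, and you give no mechanism beyond ``I would try to show.'' This is exactly the obstruction that keeps the statement a conjecture.

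Your secondary plan---using Birkhoff on zero-diagonal doubly stochastic matrices to reduce the odd block to size~$3$---is also only a sketch: ${\rm per}(I-A)$ is a multilinear (hence not concave) function on the Birkhoff polytope of derangements, so a simplex/Lagrange argument does not by itself localize the maximum.

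As an aside, the displayed formula and block count in the conjecture contain typos: with one $M_3$ block the number of $M_2$ copies must be $(n-3)/2$, and the resulting permanent is $2^{(n-3)/2}\cdot\frac{3}{2}$, which for $n=3$ gives $3/2$ (matching the lemma), not $2^{(n-1)/2}\cdot 3$.
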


\begin{lem} Let $A\in \omega_3$ and $ 2<\sigma(A)\leq 3,$ then
$$\max_{A\in \omega_3}\{{\rm per}(I-A)\}\geq \max \{\frac{\sigma^2(A)-5\sigma(A)+12}{4}, 6-2\sigma(A)\}.$$
\end{lem}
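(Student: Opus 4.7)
The plan is to establish the lower bound constructively: I would exhibit two explicit matrices in $\omega_3^s$ whose permanents of $I-A$ equal the two quantities inside the maximum, and then observe that the sup over $\omega_3^s$ dominates each.

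For the bound $6-2s$, I would take the block diagonal matrix
$$A_1 := M_2 \oplus [s-2] = \begin{pmatrix} 0 & 1 & 0 \\ 1 & 0 & 0 \\ 0 & 0 & s-2 \end{pmatrix}.$$
Since $2<s\leq 3$, every row and column sum lies in $[0,1]$, so $A_1\in\omega_3$ and $\sigma(A_1)=s$. Because $I-A_1$ is block diagonal, Proposition~\ref{prop2} gives
$$\mathrm{per}(I-A_1) = \mathrm{per}(I_2-M_2)\cdot (3-s) = 2(3-s) = 6-2s.$$

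For the quadratic bound, I would use
$$A_2 := \begin{pmatrix} 0 & (s-2)/2 & 1/2 \\ (s-2)/2 & 0 & 1/2 \\ 1/2 & 1/2 & 0 \end{pmatrix}.$$
The hypothesis $2<s\leq 3$ guarantees all entries lie in $[0,1]$, and the row/column sums $(s-1)/2,\,(s-1)/2,\,1$ are all at most $1$, so $A_2\in\omega_3^s$. Expanding the $3\times 3$ permanent directly, the three transposition contributions are $\bigl((s-2)/2\bigr)^2 + 2\cdot (1/2)^2$ and the two $3$-cycle contributions are $-2\cdot \bigl((s-2)/2\bigr)(1/2)^2$, so
$$\mathrm{per}(I-A_2) = 1 + \frac{(s-2)^2}{4} + \frac{1}{2} - \frac{s-2}{4} = \frac{3}{2} + \frac{(s-2)(s-3)}{4} = \frac{s^2-5s+12}{4}.$$
Since $A_1,A_2\in\omega_3^s$, taking the maximum over these two particular matrices yields the claimed inequality.

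The main step here is really to discover $A_2$: the general recipes of Corollary~\ref{cor:maxper} do not cover the case $n$ odd with $\sigma(A)>n-1$, so one cannot produce a candidate maximizer from the earlier theory. A natural heuristic, guided by Corollary~\ref{Leico1} (zero diagonal) and the previous lemma (the symmetric doubly stochastic matrix $\tfrac{1}{2}(J-I)$ achieves $3/2$ at $s=3$), is to keep the third row and column saturated at $(1/2,1/2)$ and distribute the remaining mass $s-2$ symmetrically between the $(1,2)$ and $(2,1)$ entries; this symmetry is exactly what collapses the $3\times 3$ permanent into the clean quadratic $\tfrac14(s^2-5s+12)$.
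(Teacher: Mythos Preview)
Your proof is correct and follows essentially the same approach as the paper: both exhibit the block-diagonal matrix $M_2\oplus[s-2]$ to realize $6-2s$, and a symmetric zero-diagonal matrix with off-diagonal entries $\tfrac12,\tfrac12,\tfrac{s-2}{2}$ to realize $\tfrac{s^2-5s+12}{4}$. Your $A_2$ is just the paper's $A_0$ with indices $2$ and $3$ interchanged, so the two constructions and the resulting permanent computations are identical up to a harmless relabeling.
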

\begin{proof}
Let $$A_0=\begin{pmatrix}0 & \frac{1}{2} & \frac{\sigma(A)}{2}-1 \\ \frac{1}{2} & 0 & \frac{1}{2} \\ \frac{\sigma(A)}{2}-1 & \frac{1}{2} & 0 \end{pmatrix}$$
and
$$A_1=\begin{pmatrix} 0 & 1 & 0 \\ 1 & 0 & 0 \\ 0 & 0 & \sigma(A)-2 \end{pmatrix}.$$
Thus we have
$${\rm per}(I-A_0)=\frac{\sigma^2(A)-5\sigma(A)+12}{4},$$
and
$${\rm per}(I-A_1)=6-2\sigma(A).$$
\end{proof}

%\begin{question}
%For each fixed $2<\sigma(A)\leq 3,$ what is $\max\{{\rm per}(I-A)|A\in\omega_3\}$? Also in that case what does such an $A_0$ maximizing ${\rm per}(I-A)$ look  like?
%\end{question}

\begin{conj}
 $$\max_{A\in \omega_3}\{{\rm per}(I-A)\}=\left\{
\begin{aligned}
&\frac{\sigma^2(A)-5\sigma(A)+12}{4} & \ {\rm if}\ \frac{-3+\sqrt{57}}{2}<\sigma(A)\leq 3\\
&6-4\sigma(A)                        & \ {\rm if}\  2 < \sigma(A)\leq \frac{-3+\sqrt{57}}{2}.
\end{aligned}
\right.$$
\end{conj}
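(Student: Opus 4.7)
The plan is to establish the conjecture by a direct optimization of ${\rm per}(I-A)$ over $\omega_3^s$ for each fixed $s \in (2,3]$, leveraging the reductions already proved. By Corollary~\ref{Leico1}, any maximizer has zero main diagonal, so I may write
$$A = \begin{pmatrix} 0 & a & b \\ c & 0 & d \\ e & f & 0 \end{pmatrix},$$
and expanding the permanent directly gives
$$ {\rm per}(I-A) = 1 + ac + df + be - ade - bcf.$$
The feasible region is the slice of the $\omega_3$ polytope cut by $a+b+c+d+e+f = s$, defined by the six row/column sum inequalities $a+b,\,c+d,\,e+f,\,c+e,\,a+f,\,b+d \le 1$ together with non-negativity of each entry.

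Next, observe that the threshold $s^{*} = \frac{-3+\sqrt{57}}{2}$ is the unique root in $(2,3]$ of $s^2 + 3s - 12 = 0$, i.e.\ the unique point in that interval where $\frac{s^2 - 5s + 12}{4}$ and $6 - 2s$ coincide. Thus the conjectured value is simply $\max\{{\rm per}(I-A_0),\,{\rm per}(I-A_1)\}$ from the preceding lemma (presuming the ``$6-4\sigma(A)$'' in the conjecture is a typo for $6-2\sigma(A)$), and the substance of the conjecture is that no other configuration can do strictly better.

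To prove this, I would combine the cofactor equalities of Lemma~\ref{Leilm2}(1) with a case analysis on the support digraph $G_A$: (i) if $G_A$ contains a directed $3$-cycle, the symmetry forced by ${\rm per}(A(k\mid i)) = {\rm per}(A(k\mid j))$ on every row with two positive entries should push $A$ onto the $A_0$ family; (ii) if $G_A$ consists of a $2$-cycle plus a separate arc, it reduces to the $A_1$ family; (iii) the remaining sparse patterns (tree-like, or arcs that do not close any cycle) have ${\rm per}(I-A)=1$ by Lemma~\ref{lm:perprof} applied with $A$ replaced by its ``one positive entry per row'' majorant from Lemma~\ref{Leilm2}(2), and are therefore non-competitive. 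Within each of families (i) and (ii), one has a one-parameter optimization subject to $\sigma(A) = s$, solvable in closed form via calculus and the arithmetic--geometric inequality exactly as was done for $A_0$ in the preceding lemma.

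The main obstacle will be the intermediate case where $G_A$ contains a $3$-cycle together with one or two of its reverse arcs, so that four or five of the off-diagonal entries are positive and neither reduction (i) nor (ii) is immediate. In this regime the cubic objective together with the six linear inequality constraints produces a nontrivial constrained optimization whose KKT critical points must be enumerated and compared against $\max\{{\rm per}(I-A_0),\,{\rm per}(I-A_1)\}$. Because ${\rm per}(I-A)$ is not concave on the polytope, the maximum need not be attained at a vertex, and the nonlinear cofactor equalities from Lemma~\ref{Leilm2} must be used to cut down the critical set. Verifying that no such hybrid configuration exceeds the two candidate values across the whole range $s \in (2,3]$, and that the switchover occurs exactly at $s^{*}$, is where I expect the real work to lie.
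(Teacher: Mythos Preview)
The paper does not prove this statement; it is labelled a \emph{conjecture} and no argument is offered. There is therefore nothing in the paper to compare your proposal against.

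As an outline toward an open problem your plan is reasonable, but it has a structural gap at the very first step. Corollary~\ref{Leico1} and Lemma~\ref{Leilm2} characterise maximisers over $\tilde{\omega}_n^s$ (row substochastic), not over $\omega_n^s$ (doubly substochastic). The perturbation in Lemma~\ref{leilm1} that kills a diagonal entry, and the row-concentration move in Lemma~\ref{Leilm2}(2), both preserve row sums but can push a column sum above~$1$; this is precisely why Section~\ref{sec:ques} exists and why Theorem~\ref{thm:premain} excludes the regime $n$ odd, $s>n-1$. Hence your opening claim ``by Corollary~\ref{Leico1} any maximiser has zero main diagonal'' and your case~(iii) reduction via Lemma~\ref{Leilm2}(2) both require a separate argument that the modified matrix stays in $\omega_3^s$, or an alternative device that respects the column constraints. (The paper's own proof of the $\Omega_3$ lemma makes the same unjustified appeal to Corollary~\ref{Leico1}, so this gap is inherited from the text rather than introduced by you.)

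You are right that the ``$6-4\sigma(A)$'' in the displayed conjecture is inconsistent with the preceding lemma and must be $6-2\sigma(A)$. You are also right that the substantive difficulty is the hybrid case where $G_A$ supports a $3$-cycle together with one or more reverse arcs, so that four to six off-diagonal entries are positive; but you explicitly leave this case unresolved. As written, then, this is an outline of an approach to an open question, with one genuine technical gap in the reductions and the hard case still to do, rather than a proof.
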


\begin{conj}
Let $n$ be odd and $A\in \omega_n$ with $n-1<\sigma(A)\leq n.$ Denote $\sigma(A)$ by $s$. Then
$$\max\{{\rm per}(I-A)|A\in \omega_n^s\}=2^{\frac{n-1}{2}}\cdot c$$
where $\displaystyle c=\max\{{\rm per}(I-B)|B\in \omega_3^{s'}\}$ and $s'=s-n+3.$
The maximum can be obtained by letting $$A=M_2\oplus M_2\oplus \ldots \oplus M_2 \oplus B_3$$ where $M_2=\begin{pmatrix}0& 1\\ 1& 0\end{pmatrix}$ with $\frac{n-1}{2}$ copies, and $B_3$ is the $3\times 3$ matrix maximizing {\rm per}$(I-B)$ for all $B\in\omega_{3}^{s'}.$
\end{conj}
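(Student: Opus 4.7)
The plan is to proceed by induction on the odd integer $n$, with base case $n = 3$ holding tautologically from the definition of $c$. For the inductive step with $n \geq 5$, the strategy is to show that any maximizer $A \in \omega_n^s$ admits a block decomposition $P A P^T = M_2 \oplus A'$ for some permutation matrix $P$, where $A' \in \omega_{n-2}^{s-2}$. Proposition~\ref{prop2} then gives ${\rm per}(I - A) = 2 \cdot {\rm per}(I - A')$, and since $n - 2$ is odd and $(n-2) - 1 < s - 2 \leq n - 2$, the inductive hypothesis applied to $A'$, together with the identity $s' = s - n + 3 = (s - 2) - (n - 2) + 3$, yields exactly the claimed formula.

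First, by Corollary~\ref{Leico1}, the maximizer $A$ may be assumed to have zero diagonal. The hypothesis $n - 1 < s \leq n$ then forces the total row (and column) deficit $\sum_i (1 - r_i(A)) = n - s$ to lie in $[0, 1)$, so almost every row and column of $A$ is saturated. The crucial structural claim is that such a maximizer must contain a pair of indices $i \neq j$ with $a_{ij} = a_{ji} = 1$. Once such a pair is located, the constraints $r_i(A), r_j(A) \leq 1$ and the analogous column bounds force $a_{ik} = a_{ki} = a_{jk} = a_{kj} = 0$ for every $k \notin \{i, j\}$, producing the required $M_2$ block and reducing to the smaller instance.

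To establish the existence of such a weight-$1$ two-cycle, I would mount a perturbation argument modeled on Lemma~\ref{leilm1}, Lemma~\ref{lmzhi}, and the cycle-product formula of Lemma~\ref{lm:perprof}. Assume for contradiction that $a_{ij} a_{ji} < 1$ for every pair $i \neq j$. Using the multilinearity of ${\rm per}(I - A)$ in each entry together with the expansion of the permanent over directed cycles in the support graph $G_A$, I would identify a two-sided mass-transfer operation --- moving weight across a $2 \times 2$ submatrix so as to preserve both row and column marginals --- that simultaneously drives one symmetric pair $(a_{ij}, a_{ji})$ toward $(1, 1)$ and strictly increases ${\rm per}(I - A)$. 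The slack $n - s < 1$ guarantees that enough room exists to carry out such a perturbation without leaving $\omega_n^s$.

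The principal obstacle is precisely this structural step. In the proof of Theorem~\ref{thm:premain} for the row substochastic setting, Lemma~\ref{Leilm2} allowed a clean reduction to the subclass $\tilde{\omega}_n^{0,1}$ of matrices with at most one positive entry per row, after which the graph-theoretic cycle analysis of Section~\ref{sec:maxper} applied directly. For $\omega_n^s$ that reduction is unavailable, since collapsing a row to a single positive entry typically violates some column-sum bound. A two-sided analogue of Lemma~\ref{leilm1} --- a mass-transfer operation preserving both marginals, akin to the $2 \times 2$ pivots used in Birkhoff's theorem --- together with a careful comparison of the cycle-product expansions before and after the pivot, appears to be what is required to complete the induction.
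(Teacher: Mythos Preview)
The statement you are attempting to prove is a \emph{conjecture} in the paper; the authors offer no proof, so there is nothing to compare against. Your proposal is thus a candidate resolution of an open problem, and it carries a genuine gap --- one you acknowledge yourself. The entire inductive step rests on the structural claim that a maximizer in $\omega_n^s$ for odd $n \geq 5$ must contain indices $i \neq j$ with $a_{ij} = a_{ji} = 1$, yielding an $M_2$ block. You do not prove this; you only outline a hoped-for two-sided perturbation and concede that such an argument ``appears to be what is required.'' That missing step is precisely why the problem is left open: the one-sided reductions of Lemma~\ref{Leilm2} and Corollary~\ref{cor:maxper} collapse once column constraints must be respected, and you supply no replacement.

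Two subsidiary issues also need attention. First, your appeal to Corollary~\ref{Leico1} to force a zero diagonal on the maximizer is not automatic in $\omega_n^s$: that corollary is stated for row substochastic matrices, and the perturbation of Lemma~\ref{leilm1} may push a column sum above~$1$, so the resulting $\tilde{A}$ need not remain doubly substochastic. Second, note that at $n = 3$ with $s = 3$ the (proved) maximizer is the matrix with all off-diagonal entries equal to $\tfrac{1}{2}$, which has no entry equal to~$1$ at all. Hence your structural claim must genuinely exploit $n \geq 5$; any perturbation argument that would apply equally well to the $3 \times 3$ case cannot succeed.
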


\bigskip

\end{document}